\providecommand{\algorithmname}{Algorithm}
\newcommand{\smu}{\scriptscriptstyle (\mu)}
\newcommand{\klein}[1]{\scriptscriptstyle #1}
\journalname{Numerical Algorithms}
\begin{document}

\title{Approximating the extreme Ritz values and upper bounds for the $A$-norm of the error in CG\thanks{This work was supported by the project 17-04150J of the Grant Agency of the Czech Republic.
}}
\author{G\'erard Meurant \and Petr Tich\'y}

\institute{G\'erard Meurant \at
              30 rue du sergent Bauchat, 75012 Paris, France \\
              \email{gerard.meurant@gmail.com}           
           \and
		Petr Tich\'y \at
              Faculty of Mathematics and Physics, Charles University\\
              Sokolovsk\'a 83, 186 75 Prague 8, Czech Republic \\
              \email{petr.tichy@mff.cuni.cz}           
}

\date{Received: date / Accepted: date}

\maketitle 
%
%
%

%

\begin{abstract}
In practical conjugate gradient (CG) computations it is important to monitor the quality of the approximate solution to $Ax=b$ so that the CG algorithm can be stopped when the required accuracy is reached.  The relevant convergence characteristics, like the $A$-norm of the error or the normwise backward error, cannot be easily computed. However, they can be estimated. Such estimates often depend on approximations of the smallest or largest eigenvalue of~$A$.

In the paper we introduce a new upper bound for the $A$-norm of the error, which is closely related to the Gauss-Radau upper bound, and discuss the problem of choosing the parameter $\mu$ which should represent a lower bound for the smallest eigenvalue of $A$.
The new bound has several practical advantages, the most important one is that it can be used as an approximation to the $A$-norm of the error even if $\mu$ is not exactly a lower bound for the smallest eigenvalue of $A$. In this case, $\mu$ can be chosen, e.g., as the smallest Ritz value or its approximation.
We also describe a very cheap algorithm, based on the incremental norm estimation technique, which allows to estimate the smallest and largest Ritz values during the CG computations. An improvement of the accuracy of these estimates of extreme Ritz values is possible, at the cost of storing the CG coefficients and solving a linear system with a tridiagonal matrix at each CG iteration.
Finally, we discuss how to cheaply approximate the normwise backward error.
The numerical experiments demonstrate the efficiency of the estimates of the extreme Ritz values, and show their practical use in error estimation in CG.

\keywords{Conjugate gradients \and error norm estimation \and approximation of Ritz values \and incremental norm estimator.
}

\subclass{65F10 \and 65F15 \and 65F35}

\end{abstract}
%
%

\section{Introduction}
The (preconditioned) Conjugate Gradient ((P)CG) algorithm by Hestenes and Stiefel \cite{HeSt1952} is now  considered as  the iterative method of
choice for solving linear systems $Ax=b$ with a real symmetric positive definite matrix~$A$. An important question to solve practical problems is to
know when to stop the iterations. Since, in CG, the norm of the residual vector $r_k=b-Ax_k$ (where $x_k$ is the approximate solution at iteration
$k$) is available, many CG codes use $\Vert r_k\Vert/\Vert r_0\Vert\le tol$ where $tol$ is a user-given threshold as a stopping criterion. This can be misleading
depending on the choice of the initial iterate $x_0$. A better stopping criterion is $\Vert r_k\Vert/\Vert b\Vert\le tol$. However, both criteria can
be misleading as it was already mentioned in \cite{HeSt1952}. Moreover, in many cases, the residual norm is oscillating making the use of these criteria
more problematic.

A more reliable stopping criterion could be based on 
the $A$-norm of the error 
$$
\Vert x - x_k \Vert_A =((x-x_k)^T A (x-x_k))^{1/2}.
$$
Mathematically, CG minimizes this quantity at each iteration $k$;
see \cite{HeSt1952}. In some linear systems arising from engineering problems the $A$-norm of the error corresponds to the energy norm and thus has a physical meaning.
Of course, in real-world problems the error and its norm are unknown. Therefore, this has lead to some research works for finding approximations or even lower
and upper bounds for the $A$-norm of the error. It turns out that the CG $A$-norm of the error is linked to a  Riemann-Stieltjes integral
for a discrete measure involving the distribution of the eigenvalues of $A$. Inspired by this connection already mentioned by Hestenes and Stiefel
\cite[p.~428]{HeSt1952},
 research on this topic was started by Gene Golub in the 1970's and continued throughout the years with several collaborators (e.g.,
G.~Dahlquist, S.~Eisenstat, S.~Nash, B.~Fischer, G.~Meurant, Z.~Strako\v{s}). The main idea is to approximate the Riemann-Stieltjes integral by Gauss or Gauss-Radau quadrature rules.
Since, in this case, the sign of the remainders of the quadrature rules are known, in theory this gives lower and upper bounds for the
$A$-norm of the error. These bounds can be used to design more
reliable stopping criteria than just using the relative norm of the residual. For details on these techniques, see \cite{DaEiGo1972,DaGoNa1979,FiGo1994,GoMe1994,GoSt1994,GoMe1997}. This research was summarized in \cite{B:Me2006} and \cite{B:GoMe2010}.
More recently,  some simpler and improved formulas for the computation of the bounds on the $A$-norm of the error were provided in \cite{MeTi2013}.

The techniques used in \cite{GoMe1994,GoSt1994,StTi2002,StTi2005} to compute lower or upper bounds use a positive integer $d$ which is called the delay, in such a way that, at CG iteration $k+d$, an estimate of the $A$-norm of the error at iteration $k$ is obtained. The larger the delay is, the better 
are the bounds at iteration $k$.
However, even when using these techniques, the situation is still not completely satisfactory. Obtaining an upper bound with the Gauss-Radau quadrature rule needs to have a prescribed parameter which should represent a lower bound for
the smallest eigenvalue of the (preconditioned) system matrix. This may not be readily available to the user. 
Moreover, some numerical examples have shown that, even if we have a good lower bound for the
smallest eigenvalue, the quality of the Gauss-Radau upper bound may deteriorate when the $A$-norm of the error becomes small.
Sometimes, it is also useful to compute an approximation of the matrix 2-norm
if the user wants to compute an estimate of the normwise backward error, see \cite{OePr1964,RiGa1967}, or to approximate the ultimate level of accuracy, or the condition number of the (preconditioned) system matrix.

The goal of this paper is to discuss and address these issues to obtain 
cheap approximations to the smallest and largest eigenvalues of the (preconditioned) system matrix during the CG computations, and to use them 
in estimating convergence characteristics like the $A$-norm of the error or the normwise backward error.
In particular, we introduce a new upper bound for the $A$-norm of the error which is less sensitive to the choice of the approximation to the smallest eigenvalue, and suggest an approximate upper bound which does not require any a priori information about the smallest eigenvalue.

The paper is organized as follows. In Section~\ref{sec:CG} we recall the Lanczos and CG algorithms as well as some relations which show the links between
CG and Gauss quadrature. Section~\ref{sec:bounds} is concerned with the Gauss-Radau upper bound and the derivation of a new upper bound. In Section~\ref{sec:bcsstk01} we present a numerical example that shows the troubles that may happen with the Gauss-Radau upper bounds, and a possible potential 
of the new upper bound which is not sensitive to the choice of the approximation to the smallest eigenvalue.
In Section~\ref{sec:Ritz_val} we address  the problem of computing estimates of the smallest and largest eigenvalues of $A$. This is done by using incremental estimates of norms of bidiagonal matrices. These algorithms can be useful in a more general setting than computing bounds for the CG error norms. 
In Sections~\ref{sec:radau} and \ref{sec:be}
these results are used to approximate the Gauss-Radau upper bound and the normwise backward error. Section~\ref{sec:exp} illustrates numerically the quality of approximations to the smallest and largest eigenvalues, and their use in approximating the normwise backward error and the $A$-norm of the error. Finally, in Section~\ref{sec:concl} we give some conclusions and perspectives.

\section{The Lanczos and CG algorithms}
 \label{sec:CG}

Given a starting vector $v\in\mathbb{R}^{N}$ and a symmetric matrix $A\in\mathbb{R}^{N\times N}$,
one can consider a sequence of nested subspaces
\[
\mathcal{K}_{k}(A,v)\equiv\mathrm{span}\{v,Av,\dots,A^{k-1}v\},\qquad k=1,2,\dots,
\]
called Krylov subspaces. The dimension of these subspaces is increasing
up to an index $n\leq N$ called the \emph{grade of $v$ with respect to
$A$}, at which the maximal dimension is attained, and $\mathcal{K}_{n}(A,v)$
is invariant under multiplication with $A$.
\begin{algorithm}[ht]
\caption{Lanczos algorithm}
\label{alg:lanczos}

\begin{algorithmic}[0]

\STATE \textbf{input} $A$, $v$

\STATE $\widetilde{\beta}_{0}=0$, $v_{0}=0$

\STATE $v_{1}=v/\|v\|$

\FOR{$k=1,\dots$}

\STATE $w=Av_{k}-\widetilde{\beta}_{k-1}v_{k-1}$

\STATE $\widetilde{\alpha}_{k}=v_{k}^{T}w$

\STATE $w=w-\widetilde{\alpha}_{k}v_{k}$

\STATE $\widetilde{\beta}_{k}=\|w\|$

\STATE $v_{k+1}=w/\widetilde{\beta}_{k}$

\ENDFOR

\end{algorithmic}
\end{algorithm}
Assuming that $k<n$, the Lanczos algorithm (Algorithm~\ref{alg:lanczos})
computes an orthonormal basis $v_{1},\dots,v_{k+1}$ of the Krylov
subspace $\mathcal{K}_{k+1}(A,v)$. The basis vectors $v_{j}$ of unit norm satisfy
the matrix relation
\[
AV_{k}=V_{k}T_{k}+\widetilde{\beta}_{k}v_{k+1}e_{k}^{T}
\]
where $V_{k}=[v_{1}\cdots v_{k}]$, $e_k$ denotes the $k$th column of the identity matrix, and

\[
T_{k}=\left[\begin{array}{cccc}
\widetilde{\alpha}_{1} & \widetilde{\beta}_{1}\\
\widetilde{\beta}_{1} & \ddots & \ddots\\
 & \ddots & \ddots & \widetilde{\beta}_{k-1}\\
 &  & \widetilde{\beta}_{k-1} & \widetilde{\alpha}_{k}
\end{array}\right]
\]
is the $k\times k$
symmetric tridiagonal matrix of the recurrence coefficients computed
in Algorithm~\ref{alg:lanczos}.
The coefficients $\widetilde{\beta}_{j}$ being positive, $T_{k}$
is a Jacobi matrix. The Lanczos algorithm works for any symmetric
matrix, but if $A$ is positive definite, then $T_{k}$ is positive
definite as well.

When solving a system of linear equations $Ax=b$ with a real symmetric positive definite matrix $A$, the CG method (Algorithm \ref{alg:cg})
\begin{algorithm}[ht]
\caption{Conjugate Gradients} \label{alg:cg}

\begin{algorithmic}[0]

\STATE \textbf{input} $A$, $b$, $x_{0}$

\STATE $r_{0}=b-Ax_{0}$

\STATE $p_{0}=r_{0}$

\FOR{$k=1,\dots$ until convergence}

\STATE $\gamma_{k-1}=\frac{r_{k-1}^{T}r_{k-1}}{p_{k-1}^{T}Ap_{k-1}}$

\STATE $x_{k}=x_{k-1}+\gamma_{k-1}p_{k-1}$

\STATE $r_{k}=r_{k-1}-\gamma_{k-1}Ap_{k-1}$

\STATE $\delta_{k}=\frac{r_{k}^{T}r_{k}}{r_{k-1}^{T}r_{k-1}}$

\STATE $p_{k}=r_{k}+\delta_{k}p_{k-1}$

\ENDFOR

\end{algorithmic}
\end{algorithm}
can be used. Mathematically, the CG iterates $x_{k}$ minimize
the $A$-norm of the error over the manifold $x_{0}+\mathcal{K}_{k}(A,r_{0})$,
\[
\|x-x_{k}\|_{A}=\min_{y\in x_{0}+\mathcal{K}_{k}(A,r_{0})}\|x-y\|_{A},
\]
and the residual vectors $r_{k}=b-Ax_{k}$ are proportional to the
Lanczos vectors $v_{j}$,
\[
v_{j+1}=(-1)^{j}\frac{r_{j}}{\|r_{j}\|}\,,\qquad j=0,\dots,k.
\]
Thanks to this close relationship between the CG and Lanczos algorithms
it can be shown (see, for instance \cite{B:Me2006}) that the recurrence
coefficients computed in both algorithms are connected via
\begin{equation}
\widetilde{\beta}_{k}=\frac{\sqrt{\delta_{k}}}{\gamma_{k-1}},\quad\widetilde{\alpha}_{k}=\frac{1}{\gamma_{k-1}}+\frac{\delta_{k-1}}{\gamma_{k-2}},\quad\delta_{0}=0,\quad\gamma_{-1}=1.\label{eq:CGLanczos}
\end{equation}
 Writing these formulas in matrix form, we get
$$
T_{k}=L_{k}L_{k}^{T},\qquad L_{k}^{T}=\left[\begin{array}{cccc}
\frac{1}{\sqrt{\gamma_{0}}} & \sqrt{\frac{\delta_{1}}{\gamma_{0}}}\\
 & \ddots & \ddots\\
 &  & \ddots & \sqrt{\frac{\delta_{k-1}}{\gamma_{k-2}}}\\
 &  &  & \frac{1}{\sqrt{\gamma_{k-1}}}
\end{array}\right].
$$
In other words, CG computes implicitly the Cholesky factorization
of the Jacobi matrix $T_{k}$ generated by the Lanczos algorithm.
Hence, the eigenvalues of $T_{k}$ (the so-called Ritz values) are equal to the squared singular
values of the upper bidiagonal matrix $L_{k}^{T}$.

It is well known that the reduction of the squared $A$-norm of the error from iteration $k-1$ to iteration $k$ is given by  $\gamma_{k-1}\|r_{k-1}\|^{2}$;
see \cite[relation~(6:1)]{HeSt1952}. As a consequence
\begin{equation}
\|x-x_{0}\|_{A}^{2}=\sum_{j=0}^{k-1}\gamma_{j}\|r_{j}\|^{2}+\|x-x_{k}\|_{A}^{2}.\label{eq:GQ}
\end{equation}
The relation~\eqref{eq:GQ} represents the basis for the quadrature-based estimation of the $A$-norm of the error in the CG method
\cite{GoMe1994,GoSt1994,GoMe1997,StTi2002,StTi2005,MeTi2013,MeTi2014}. In more details, let $A=Q\Lambda Q^T$ be the spectral decomposition of $A$, with
$Q=[q_1,\dots,q_N]$ orthonormal and $\Lambda=\mathrm{diag}(\lambda_1,\dots,\lambda_N)$, the $\lambda_i$'s, $i=1,\dots,N$ being the eigenvalues of $A$. For simplicity of notation we assume that the eigenvalues of $A$
are distinct and ordered as $\lambda_1 < \lambda_2 < \dots < \lambda_N$. Let us define the weights $\omega_{i}$ by
\begin{equation}
 \omega_{i}\equiv\frac{(r_{0},q_{i})^{2}}{\| r_0 \|^2}\qquad\mbox{so that}\qquad\sum_{i=1}^{N}\omega_{i}=1\,,\label{17}
\end{equation}
and the (nondecreasing) stepwise constant distribution function $\omega(\lambda)$ with a finite number of points of increase
$\lambda_{1},\lambda_{2},\dots,\lambda_{N}$,
\begin{equation}
 \omega(\lambda)\equiv\;\left\{ \;
 \begin{array}{rcl}
 0 & \textnormal{for} & \lambda<\lambda_{1}\,,\\[1mm]
 \sum_{j=1}^{i}\omega_{j} & \textnormal{for} & \lambda_{i}\leq\lambda<\lambda_{i+1}\,,\quad1\leq i\leq N-1\,,\\[1mm]
 1 & \textnormal{for} & \lambda_{N}\leq\lambda\,.
 \end{array}\right.\,\label{eq:schema_hermit}
\end{equation}
 Having the distribution function $\omega(\lambda)$ and an interval
$\langle\zeta,\xi\rangle$ such that $\zeta<\lambda_{1}<\lambda_{2}<\dots<\lambda_{N}<\xi$, for any continuous function $f$, one can define  the
Riemann-Stieltjes integral (see, for instance \cite{B:GoMe2010})
\begin{equation}
 \int_{\zeta}^{\xi}f(\lambda)\, d\omega(\lambda) = \sum_{i=1}^{N}\omega_{i}f(\lambda_{i}).\label{19}
\end{equation}
For the integrated function defined as $f(\lambda)=\lambda^{-1}$, we obtain the integral representation of the squared initial $A$-norm of the error
\begin{eqnarray*}
\| x-x_0\|_A^2 &=& r_0^TA^{-1} r_0
= (Q^Tr_0)^T\Lambda^{-1} (Q^T r_0) \\ &=&
\|r_0\|^2\sum_{j=1}^n \lambda_j^{-1} \omega_j=
\|r_0\|^2 \int_{\zeta}^{\xi}\lambda^{-1}\, d\omega(\lambda).
\end{eqnarray*}
Finally, using the optimality of CG, it can be shown that the formula~\eqref{eq:GQ} represents the scaled $k$-point Gauss quadrature rule for
approximating the Riemann-Stieltjes integral of the function $f(\lambda)=\lambda^{-1}$, with the scaled positive reminder $\|x-x_k\|^2_A$. The
scaling factor is $\|r_{0}\|^{-2}$. Various modified quadrature rules can be used to obtain other approximations to the integral, possibly also with a negative reminder. Such rules usually require some a priori information about the spectrum of $A$. For a summary, see, e.g., the book
\cite{B:GoMe2010}.

\section{Quadrature-based bounds and a new upper bound
 \label{sec:bounds}}

In this section we concentrate on two simple upper bounds. To summarize some results of
\cite{GoSt1994,GoMe1994,StTi2002}, and \cite{MeTi2013} related to
the Gauss and Gauss-Radau quadrature bounds for the $A$-norm of the
error in~CG, it has been shown that
\begin{equation}
 \label{eq:basic}
\gamma_{k}\|r_{k}\|^{2}<\|x-x_{k}\|_{A}^{2}<\gamma_{k}^{\smu}\|r_{k}\|^{2}
\end{equation}
 where
\begin{equation}
\gamma_{k+1}^{{\scriptscriptstyle (\mu)}}=\frac{\left(\gamma_{k}^{{\scriptscriptstyle (\mu)}}-\gamma_{k}\right)}{\mu\left(\gamma_{k}^{{\scriptscriptstyle (\mu)}}-\gamma_{k}\right)+\delta_{k+1}},\qquad\gamma_{0}^{{\scriptscriptstyle (\mu)}}=\frac{1}{\mu},\label{eq:gamma}
\end{equation}
$k<n-1$, and $\mu$ such that $0<\mu\leq\lambda_{\min}$.
Note that in the special case $k=n-1$  since
$\|x-x_{n}\|_{A}^{2}=0$, we get $\|x-x_{n-1}\|_{A}^{2}=\gamma_{n-1}\|r_{n-1}\|^{2}$. If the initial residual $r_0$ has a nontrivial component in the eigenvector corresponding to $\lambda_{\min}$, then $\lambda_{\min}$ is also an eigenvalue of $T_n$. If in addition $\mu$ is chosen such that $\mu=\lambda_{\min}$, then $\gamma_{n-1}=\gamma_{n-1}^{\smu}$ and the second strict inequality in \eqref{eq:basic} changes to equality.

The simple updating formula \eqref{eq:gamma} was first presented in~\cite{MeTi2013}. Following the idea of \cite{GoSt1994} and~\cite{StTi2002}, we
can improve the lower and upper bounds in \eqref{eq:basic} by considering quadrature rules \eqref{eq:GQ} at iterations $k$ and $k+d$ for some integer $d>0$
which is called the delay. Then, we get the formula
\begin{equation}
 \label{eq:HS}
\|x-x_{k}\|_{A}^{2}=\sum_{j=k}^{k+d-1}\gamma_{j}\|r_{j}\|^{2}+\|x-x_{k+d}\|_{A}^{2},
\end{equation}
and one can bound the error norm at the iteration $k+d$ using \eqref{eq:basic} to obtain 
\begin{equation}
 \label{eq:delaylower}
 \sum_{j=k}^{k+d-1}\gamma_{j}\|r_{j}\|^{2}+\gamma_{k+d}\Vert r_{k+d}\Vert^2<
 \|x-x_{k}\|_{A}^{2} 
\end{equation}
and
\begin{equation}
 \label{eq:delayupper}
 \|x-x_{k}\|_{A}^{2} < \sum_{j=k}^{k+d-1}\gamma_{j}\|r_{j}\|^{2}+\gamma_{k+d}^{\smu}\Vert r_{k+d}\Vert^2.
\end{equation}
Note that \eqref{eq:delaylower} and \eqref{eq:delayupper} give a lower bound and an upper bound for the $A$-norm of the error at iteration $k$ when CG is already at iteration $k+d$ whence \eqref{eq:basic} provides
lower and upper bounds when CG is at iteration $k$.
In \cite{StTi2002} it has been shown that the identity \eqref{eq:HS} holds
(up to some small inaccuracies) also for numerically computed quantities in finite precision arithmetic, until the $A$-norm of the error reaches its ultimate level of accuracy. So, it can be used safely for estimating the $A$-norm of the actual error.

Mathematically, we will derive another upper bound for the
squared $A$-norm of the error, which is closely related to the Gauss-Radau upper bound.
This bound depends on the ratio
\[
\phi_{k}\equiv\frac{\left\Vert r_{k}\right\Vert ^{2}}{\left\Vert p_{k}\right\Vert ^{2}},
\]
which can be updated using a simple recurrence relation. In particular,
using $p_{k}=r_{k}+\delta_{k}p_{k-1}$ and the orthogonality between
$r_{k}$ and $p_{k-1}$ (local orthogonality), we obtain
\[
\left\Vert p_{k}\right\Vert ^{2}=\left\Vert r_{k}\right\Vert ^{2}\left(1+\delta_{k}\frac{\left\Vert p_{k-1}\right\Vert ^{2}}{\|r_{k-1}\|^{2}}\right),
\]
and, therefore,
\begin{equation}
\phi_{k}=\frac{\phi_{k-1}}{\phi_{k-1}+\delta_{k}},\qquad\phi_{0}=1.\label{eq:phi}
\end{equation}
Hence $\phi_k$ can be updated cheaply without computing the norm of $p_k$ which is not readily available in CG.
From \eqref{eq:phi} and by induction, it follows that
$$
	\phi_k^{-1} = 1 + \frac{\Vert r_k \Vert^2}{\Vert r_{k-1} \Vert^2} \phi_{k-1}^{-1} = \Vert r_k \Vert^2 
	\sum_{j=0}^{k}\|r_{j}\|^{-2}
$$
and hence
\begin{equation}
\left\Vert r_{k}\right\Vert ^{2}\phi_{k}=\left(\sum_{j=0}^{k}\|r_{j}\|^{-2}\right)^{-1};\label{eq:minres}
\end{equation}
see also \cite[Theorem~5:3]{HeSt1952}. Note that mathematically,
the quantity \eqref{eq:minres} can be interpreted as the norm of
the residual vector determined by the minimal residual method; see,
e.g., \cite[Theorem~3.5]{EiEr2001}. In finite precision arithmetic, 
the quantity \eqref{eq:minres} cannot be, in general, interpreted
as the norm of the residual vector generated by some minimal residual
method. We remark that the quantity $\phi_{k}$ appears also as a coefficient
in strategies for residual smoothing \cite{GuRo2001,GuRo2001a}. In
particular, one can compute the smoothed residual $r_{k}^{S}$ and
the corresponding approximation $x_{k}^{S}$ using the recurrences
\begin{eqnarray*}
r_{k}^{S} & = & \left(1-\phi_{k}\right)r_{k-1}^{S}+\phi_{k}r_{k},\quad x_{k}^{S}=\left(1-\phi_{k}\right)x_{k-1}^{S}+\phi_{k}x_{k}.
\end{eqnarray*}

\noindent The new upper bound is as follows.

\begin{theorem}
Let $0<\mu\leq\lambda_{\min}$ be given. The approximations $x_{k}$, $k<n$,
generated by the CG method satisfy
\begin{equation}
\|x-x_{k}\|_{A}^{2} \,<\, \frac{\|r_{k}\|^{2}}{\mu}\frac{\|r_{k}\|^{2}}{\|p_{k}\|^{2}},\label{eq:newbound}
\end{equation}
and the bound is decreasing with increasing $k$.
\end{theorem}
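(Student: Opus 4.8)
The plan is to prove \eqref{eq:newbound} by showing that its right-hand side is never smaller than the Gauss--Radau upper bound $\gamma_k^{\smu}\|r_k\|^2$ from \eqref{eq:basic}, and then to invoke that known bound. Since $\phi_k=\|r_k\|^2/\|p_k\|^2$, the asserted inequality is equivalent to $\|x-x_k\|_A^2<(\phi_k/\mu)\|r_k\|^2$, so it suffices to establish the purely scalar comparison
\[
\gamma_k^{\smu}\;\le\;\frac{\phi_k}{\mu},\qquad k<n,
\]
between the two recursively defined sequences \eqref{eq:gamma} and \eqref{eq:phi}. Once this is in hand, \eqref{eq:basic} gives $\|x-x_k\|_A^2<\gamma_k^{\smu}\|r_k\|^2\le(\phi_k/\mu)\|r_k\|^2$, which is exactly \eqref{eq:newbound}.

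To prove the scalar comparison I would set $u_k\equiv\mu\,\gamma_k^{\smu}$ and rewrite \eqref{eq:gamma} in the form $u_{k+1}=f_{k+1}(u_k-\mu\gamma_k)$, where $f_{k+1}(t)\equiv t/(t+\delta_{k+1})$, with $u_0=1$; the target $\phi_k$ satisfies the analogous recurrence $\phi_{k+1}=f_{k+1}(\phi_k)$ with $\phi_0=1$. The proof is then a short induction. The base case is the equality $u_0=1=\phi_0$, and for the step I use that $f_{k+1}$ is strictly increasing on $(0,\infty)$ together with the estimate $0<u_k-\mu\gamma_k\le u_k\le\phi_k$. Here the strict positivity $u_k-\mu\gamma_k=\mu(\gamma_k^{\smu}-\gamma_k)>0$ is precisely the statement that the Gauss--Radau upper bound strictly exceeds the Gauss lower bound, i.e.\ the two strict inequalities in \eqref{eq:basic} for the indices used, while $u_k\le\phi_k$ is the induction hypothesis. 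Applying the monotone map yields $u_{k+1}=f_{k+1}(u_k-\mu\gamma_k)<f_{k+1}(\phi_k)=\phi_{k+1}$; note the deduction is in fact strict for every $k\ge1$, because the argument is decreased by the positive amount $\mu\gamma_k$ before $f_{k+1}$ is applied. This strictness is what upgrades $\le$ to the claimed $<$ in \eqref{eq:newbound}, and it also covers the endpoint $k=n-1$, where $\|x-x_{n-1}\|_A^2=\gamma_{n-1}\|r_{n-1}\|^2$ and the first inequality of \eqref{eq:basic} may be an equality, but the second factor $\gamma_{n-1}^{\smu}<\phi_{n-1}/\mu$ remains strict.

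For the monotonicity claim that the bound decreases with $k$, I would not differentiate anything but simply read it off from \eqref{eq:minres}: the right-hand side of \eqref{eq:newbound} equals $\mu^{-1}\|r_k\|^2\phi_k=\mu^{-1}\big(\sum_{j=0}^k\|r_j\|^{-2}\big)^{-1}$, and the sum $\sum_{j=0}^k\|r_j\|^{-2}$ strictly increases with $k$ since each added term is positive. Hence its reciprocal, and therefore the whole bound, strictly decreases.

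The only genuinely delicate point is recognizing the substitution $u_k=\mu\,\gamma_k^{\smu}$ that turns \eqref{eq:gamma} into the same fractional-linear map $t\mapsto t/(t+\delta_{k+1})$ that governs $\phi_k$ in \eqref{eq:phi}; after that the comparison is forced by monotonicity. I expect the main obstacle to be the bookkeeping of strict versus non-strict inequalities, so that the final bound comes out strict for all $k<n$ including the last CG step, rather than the mechanics of the induction itself.
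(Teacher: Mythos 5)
Your proposal is correct and follows essentially the same route as the paper: both reduce \eqref{eq:newbound} to the scalar comparison $\mu\gamma_k^{\smu}\le\phi_k$, prove it by induction using the positivity of $\gamma_k^{\smu}-\gamma_k$ from \eqref{eq:basic} and the monotonicity of $t\mapsto t/(t+\delta_{k+1})$, and obtain the decrease of the bound from \eqref{eq:minres}. Your explicit handling of the endpoint $k=n-1$, where the second inequality in \eqref{eq:basic} may degenerate to equality but the strictness of $\mu\gamma_{n-1}^{\smu}<\phi_{n-1}$ saves the strict bound, is a careful touch that the paper leaves implicit.
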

\begin{proof}
Based on \eqref{eq:basic} it is sufficient to show that $\mu\gamma_{k}^{{\scriptscriptstyle (\mu)}}\leq\phi_{k}$.
We will prove it by induction. The inequality holds for $k=0$. Using
the induction hypothesis, \eqref{eq:gamma}, and \eqref{eq:phi}
we obtain, for $k<n-1$,
\[
\mu\gamma_{k+1}^{{\scriptscriptstyle (\mu)}}=\frac{\mu\left(\gamma_{k}^{{\scriptscriptstyle (\mu)}}-\gamma_{k}\right)}{\mu\left(\gamma_{k}^{{\scriptscriptstyle (\mu)}}-\gamma_{k}\right)+\delta_{k+1}}<\frac{\mu\gamma_{k}^{{\scriptscriptstyle (\mu)}}}{\mu\gamma_{k}^{{\scriptscriptstyle (\mu)}}+\delta_{k+1}}\leq\frac{\phi_{k}}{\phi_{k}+\delta_{k+1}}=\phi_{k+1}.
\]
Recall that $\gamma_{k}^{{\scriptscriptstyle (\mu)}}-\gamma_{k}$ is
positive because of \eqref{eq:basic}. Finally, using \eqref{eq:minres},
the bound \eqref{eq:newbound} is monotonically decreasing
with increasing $k$.
\end{proof}

The tightness of the bound \eqref{eq:newbound} can further be improved 
when using a delay~$d$, similarly as in \eqref{eq:delayupper}. First, the proof of the previous theorem also shows that the Gauss-Radau upper bound presented in \eqref{eq:basic} can be bounded from above by
\begin{equation}\label{eq:newbound2}
	\gamma_{k}^{{\scriptscriptstyle (\mu)}} \| r_k \|^2 < 
	\frac{\|r_{k}\|^{2}}{\mu}\frac{\|r_{k}\|^{2}}{\|p_{k}\|^{2}}.
\end{equation}
Second, combining \eqref{eq:delayupper} and \eqref{eq:newbound2} we can get 
an improved upper bound 
\begin{equation}\label{eq:newbound3}
	\|x-x_{k}\|_{A}^{2} <  \sum_{j=k}^{k+d-1}\gamma_{j}\|r_{j}\|^{2}+\frac{\|r_{k+d}\|^{2}}{\mu}\frac{\|r_{k+d}\|^{2}}{\|p_{k+d}\|^{2}}.
\end{equation}

In practical computations, the parameter $\mu$ has to be determined.
This represents a nontrivial task.

\section{A numerical example: The choice of $\mu$}
 \label{sec:bcsstk01}

As an example that can demonstrate the difficulties to compute accurate upper bounds for the $A$-norm of the error, we consider the 
matrix {\tt bcsstk01} from the set BCSSTRUC1 in the Harwell-Boeing collection, which can be obtained from 
the Matrix Market\footnote{{\tt
http://math.nist.gov/MatrixMarket}} 
or from the SuiteSparse Matrix Collection\footnote{
{\tt
https://sparse.tamu.edu/}}. It is a small stiffness matrix of order $48$
 arising from dynamic analysis in structural engineering with $400$ nonzero entries. Its condition number is $\kappa(A)=8.8234\times 10^5$.
The smallest eigenvalue $\lambda_{\min}(A)=3.417267562666500\times 10^3$ was computed in extended precision and rounded to double precision.
The right-hand side $b$ has been chosen such that $b$ has equal
components in the eigenvector basis, and such that $\|b\|=1$.

The linear system $Ax=b$ is difficult to solve with CG without a preconditioner. We have to perform around $180$ iterations to reach the maximum attainable accuracy when the
matrix is only of order $48$. There is a long phase of quasi-stagnation of the $A$-norm of the error that last almost $100$ iterations as one can see in
\figurename~\ref{fig-1}. Denote
\begin{equation}\label{eq:denote}
 u_k^{\smu} \equiv \sqrt{\gamma_k^{\smu}} \Vert r_k\Vert, \qquad \pmb{u}_k^{\smu} \equiv \sqrt{\frac{\phi_k}{\mu}}\, \| r_k \|
\end{equation}
the bounds which correspond to \eqref{eq:basic} and \eqref{eq:newbound} (without any delay $d=0$).

\figurename~\ref{fig-1} displays the $A$-norm of the error (dotted curve), the bounds
$u_k^{\smu}$ for
different values of $\mu$ equal to $\lambda_{\min}/(1+10^{-m}), m=2,...,14$ (dashed curves), and the new upper bounds $\pmb{u}_k^{\smu}$ (thick solid curves).
The closer $\mu$ is to $\lambda_{\min}$ the better is the upper bound $u_k^{\smu}$ of the $A$-norm of the error. However, below a level of
approximately $10^{-8}$ all the values of $\mu$ in our experiment give visually the same upper bound $u_k^{\smu}$ which is not very close to the
$A$-norm of the error. We can also observe
 that the new upper bound $\pmb{u}_k^{\smu}$ is insensitive to the choice of $\mu$ and gives an envelope of the Gauss-Radau upper bounds~$u_k^{\smu}$.

\begin{figure}[!htbp]
\centering
 \includegraphics[width=8cm]{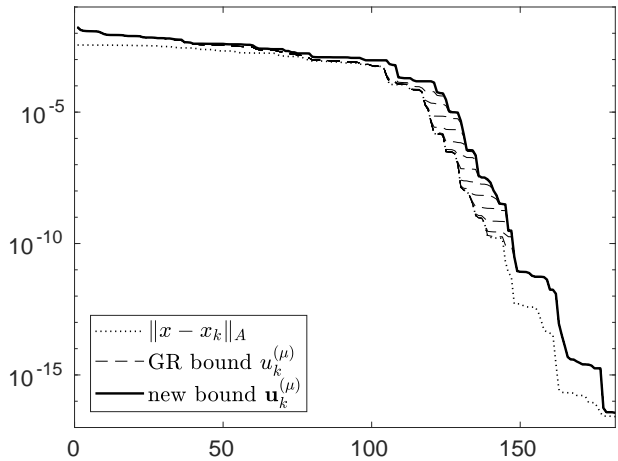}
\caption{{\tt bcsstk01}, $u_k^{\smu}$ and $\pmb{u}_k^{\smu}$, $\mu=\lambda_{\min}/(1+10^{-m}), m=2,...,14$} \label{fig-1}
\end{figure}

\figurename~\ref{fig-2} shows the ``upper bounds'' $u_k^{\smu}$ for values of $\mu$ which are larger than but close to $\lambda_{\min}$; $\mu =
\lambda_{\min}/(1-10^{-m}), m=2,4,6,...,14$. We use quotes since, as one can see, we do not obtain an upper bound in general, even though we are
close to $\lambda_{\min}$. If $\mu$ is chosen to be larger than $\lambda_{\min}$, then, at some point, the coefficient $\gamma_k^{\smu}$ can even be
negative. In such cases we use $|\gamma_k^{\smu}|$, and emphasize the corresponding value by a dot.
\begin{figure}[!htbp]
\centering
 \includegraphics[width=8cm]{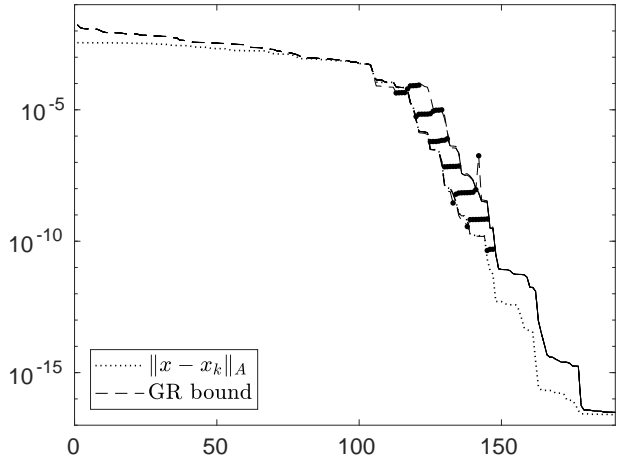}
\caption{{\tt bcsstk01}, $\sqrt{\left|\gamma_k^{\smu}\right|}\, \Vert r_k\Vert$, $\mu = \lambda_{\min}/(1-10^{-m}), m=2,4,6,,...,14$ } \label{fig-2}
\end{figure}
In \figurename~\ref{fig-2} we do not plot the new bound $\pmb{u}_k^{\smu}$.
However, 
from its definition and the assumption that $\mu \approx\lambda_{\min}$
it follows that $\pmb{u}_k^{\smu}$  will stay visually the same as in 
\figurename~\ref{fig-1}.

In summary, the node $\mu$ should satisfy
$\mu\leq\lambda_{\min}$, and, simultaneously, it should closely approximate $\lambda_{\min}$, otherwise the Gauss-Radau upper bound
$u_k^{\smu}$ would be a poor approximation of the $A$-norm of the error.
If the smallest eigenvalue is known in advance, then the bound $u_k^{\smu}$ can give very good results until some level of accuracy of the error norm
(in our case $10^{-8}$) is reached. Below this level, the bounds $u_k^{\smu}$ and $\pmb{u}_k^{\smu}$ visually coincide, and are far away from the $A$-norm of the norm.

If the parameter $\mu$ has to be determined, possibly in some adaptive way, then we can expect troubles. First, one cannot hope in general to get a
very accurate approximation of the smallest eigenvalue without too much work. Second, there is usually no guarantee that the condition $\mu\leq\lambda_{\min}$ is
satisfied. Typically, the best we can get from the Lanczos process are  the Ritz values (eigenvalues of $T_k$) which can approximate the eigenvalues
of~$A$. However, Ritz values provide only upper bounds on $\lambda_{\min}$, and some heuristics (e.g., multiplication by a safety constant) have to
be used to obtain $\mu$ with the desired properties. As we have seen in the numerical example, the value $u_k^{\smu}$ can be very sensitive to small
perturbations of~$\mu$. Then, using a heuristic can strongly influence the approximation properties of $u_k^{\smu}$, and cause numerical troubles in
computation of $u_k^{\smu}$ if $\mu>\lambda_{\min}$. On the other hand, the new bound $\pmb{u}_k^{\smu}$ can be computed without any troubles also
for $\mu>\lambda_{\min}$. If in addition $\mu\approx\lambda_{\min}$, then either $\pmb{u}_k^{\smu}$ represents an upper bound, or, it is an 
approximation of the $A$-norm of the error. In other words, an approximation of the smallest Ritz value can be used as a heuristic for the bound
$\pmb{u}_k^{\smu}$.

\section{Approximating the extreme Ritz values}
 \label{sec:Ritz_val}

In this section we develop efficient algorithms for the incremental
approximation of the smallest and largest Ritz values. This information
can be used not only in the error approximation techniques based on
various modified quadrature rules (see, e.g., \cite{GoMe1994,GoMe1997,MeTi2013}),
but also to approximate the 2-norm of $A$ or the condition number
of $A$. Note that an approximation of $\|A\|$ is needed in
estimating the maximum attainable accuracy (see \cite{Gr1997}) or in the computation of
the normwise backward error (see \cite{RiGa1967}).

As already mentioned, Jacobi matrices $T_{k}$ and the lower bidiagonal matrices $L_{k}$
which appear in CG are related through $T_{k}=L_{k}L_{k}^{T}$. In
particular, it holds that
\begin{equation}
\lambda_{\max}(T_{k})=\left\Vert L_{k}\right\Vert ^{2},\qquad\lambda_{\min}(T_{k})=\left\Vert L_{k}^{-1}\right\Vert ^{-2}.\label{eq:why}
\end{equation}
Hence, one can approximate the extreme eigenvalues of $T_{k}$ using
 incremental norm estimation applied to the upper triangular matrices
$L_{k}^{T}$ and $L_{k}^{-T}$. Although we are mainly motivated by
the approximation of the extreme Ritz values in CG, we consider the
problem of incremental norm estimation of bidiagonal matrices and
their inverses by itself, since it can be useful also in other algorithms
involving bidiagonal matrices.

\subsection{The eigenvalues and eigenvectors of a $2\times2$ symmetric matrix}

An important ingredient of incremental norm estimation is the fact
that the eigenvalues and eigenvectors of a $2\times2$ symmetric matrix
are known explicitly. Consider a matrix of the form
\begin{equation}
\left[\begin{array}{cc}
\rho & \sigma\\
\sigma & \tau
\end{array}\right].\label{eq:B}
\end{equation}
The two eigenvalues of \eqref{eq:B} are given by
\[
\lambda_{+}=\frac{1}{2}\left(\rho+\tau+\chi\right),\qquad\lambda_{-}=\frac{1}{2}\left(\rho+\tau-\chi\right)
\]
where
\begin{equation}
 \label{eq:chi}
\chi^{2}=(\rho-\tau)^{2}+4\sigma{}^{2}.
\end{equation}
If $\sigma\neq0$, the matrix of unnormalized eigenvectors is given
by
\[
\left[\begin{array}{cc}
\rho-\tau+\chi & \rho-\tau-\chi\\
2\sigma & 2\sigma
\end{array}\right].
\]
For more details see \cite[p.306]{DuVo2002}, \cite[p.166]{B:GoMe2010}.

\subsection{Incremental estimation of the norms of upper triangular matrices\label{sec:incremental}}

To approximate the maximum singular value of an upper triangular matrix,
we use an incremental estimator proposed in \cite{DuVo2002}. The
algorithm is based on incremental improvement of an approximation
of the right singular vector that corresponds to the maximum singular
value. In~\cite{DuTu2014} it has been shown that this technique
tends to be superior, with respect to approximating maximum singular
values, to the original incremental technique proposed in \cite{Bi1990}.
In the following we recall the basic idea of the incremental norm
estimation and reformulate slightly the algorithm so that it can
be efficiently applied to upper bidiagonal matrices and their inverses.

Let $R\in\mathbb{R}^{k\times k}$ be an upper triangular matrix and let $z$ be its approximate (or exact) maximum right singular vector. Let
\begin{equation}
\widehat{R}=\left[\begin{array}{cc}
R & v\\
 & \eta
\end{array}\right],\qquad v\in\mathbb{R}^{k},\quad\eta\in\mathbb{R},\label{eq:Rext}
\end{equation}
and consider the new approximate maximum right singular vector in
the form
\begin{equation}
\widehat{z}=\left[\begin{array}{c}
sz\\
c
\end{array}\right],\label{eq:z}
\end{equation}
where $s^{2}+c^{2}=1$. The parameters $s$ and $c$ are chosen such that the norm of the vector
$\widehat{R}\widehat{z}$ is maximal. It holds that
\[
\|\widehat{R}\widehat{z}\|^{2}=\left[\begin{array}{c}
s\\
c
\end{array}\right]^{T}\left[\begin{array}{cc}
\rho & \sigma\\
\sigma & \tau
\end{array}\right]\left[\begin{array}{c}
s\\
c
\end{array}\right]
\]
where
\[
\rho=\|Rz\|^{2},\qquad\sigma=v^{T}Rz,\qquad\tau=v^{T}v+\eta^{2}.
\]
Hence, to maximize $\|\widehat{R}\widehat{z}\|^{2}$, we need to determine
the maximum eigenvalue of the symmetric $2\times2$ matrix \eqref{eq:B},
and the corresponding eigenvector. Using the previous results
\begin{equation}
\left[\begin{array}{c}
s\\
c
\end{array}\right]=\frac{u}{\|u\|},\qquad u=\left[\begin{array}{c}
\rho-\tau+\chi\\
2\sigma
\end{array}\right],\label{eq:sc}
\end{equation}
and
\begin{equation*}
\lambda_{+}=\frac{\rho+\tau+\chi}{2},\qquad\chi^{2}=\left(\rho-\tau\right)^{2}+4\sigma{}^{2}.
\end{equation*}
Note that if $\sigma=0$, the formula for the eigenvector that corresponds
to $\lambda_{+}$ is still valid. Next, it holds that
\[
\|u\|^{2}=2(\chi^{2}+\left(\rho-\tau\right)\chi),
\]
and, therefore, from \eqref{eq:chi},
\[
c^{2}=\frac{2\sigma^{2}}{\chi^{2}+\left(\rho-\tau\right)\chi}=\frac{1}{2}\frac{\chi^{2}-\left(\rho-\tau\right)^{2}}{\chi^{2}+\left(\rho-\tau\right)\chi}=\frac{1}{2}\left(1-\frac{\rho-\tau}{\chi}\right).
\]
We can also express $\|\widehat{R}\widehat{z}\|^{2}$ in a more convenient form
\begin{eqnarray*}
\|\widehat{R}\widehat{z}\|^{2} & = & \frac{\rho+\tau+\chi}{2}=\rho+\frac{\chi}{2}\left(1-\frac{\rho-\tau}{\chi}\right)=\rho+\chi c^{2}.
\end{eqnarray*}
To compute $\widehat{z}$, we still need to determine the signs of $s$ and $c$. From \eqref{eq:sc} it follows that $s\geq0$ and $c$ has the same sign as
$\sigma.$ Therefore,
\[
s=\sqrt{1-c^{2}},\qquad c=|c|\mathrm{sign}(\sigma).
\]
Using the subscript $k$, we can formulate Algorithm~\ref{alg:incremental}
for the incremental norm estimation~of
\begin{equation}
R_{k+1}=\left[\begin{array}{cc}
R_{k} & v_{k}\\
 & \eta_{k}
\end{array}\right],\qquad v_{k}\in\mathbb{R}^{k},\quad\eta_{k}\in\mathbb{R},\label{eq:Rk}
\end{equation}
where $R_k$ in Algorithm~\ref{alg:incremental} is a principal submatrix of $R_{k+1}$.

\begin{algorithm}[ht]
\caption{Incremental estimation of $\|R_{k}\|^{2}$}
\label{alg:incremental}

\begin{algorithmic}[0]

\STATE \textbf{input} matrices $R_{k}$

\STATE $z_{1}=1$,

\FOR{$k=1,\dots$}

\STATE

\STATE \texttt{\% $\dots$ Compute the entries of the $2\times2$
matrix.}

\STATE

\STATE $\rho_{k}=\|R_{k}z_{k}\|^{2},\ \sigma_{k}=v_{k}^{T}R_{k}z_{k},\ \tau_{k}=v_{k}^{T}v_{k}+\eta_{k}^{2}$

\STATE

\STATE \texttt{\% $\dots$ Compute the new estimate $\rho_{k+1}$. }

\STATE

\STATE $\chi_{k}^{2}=\left(\rho_{k}-\tau_{k}\right)^{2}+4\sigma_{k}^{2},\ c_{k}^{2}=\frac{1}{2}\left(1-\frac{\rho_{k}-\tau_{k}}{\chi_{k}}\right),\ \rho_{k+1}=\rho_{k}+\chi_{k}c_{k}^{2}$

\STATE

\STATE \texttt{\% $\dots$ If required, compute $z_{k+1}$.}

\STATE

\STATE $s_{k}=\sqrt{1-c_{k}^{2}},\ c_{k}=|c_{k}|\,\mathrm{sign}(\sigma_{k}),\ z_{k+1}=\left[\begin{array}{c}
s_{k}z_{k}\\
c_{k}
\end{array}\right]$

\ENDFOR

\end{algorithmic}
\end{algorithm}

Note that if we start the algorithm with $z_{1}=1$, then $\rho_{1}=r_{1,1}^{2}$, and $\rho_{2}$ is equal to $\|R_{2}\|^{2}$. In more details, it
holds that
\begin{eqnarray*}
\rho_{2} & = & \rho_{1}+\chi_{1}c_{1}^{2}\;=\;\frac{1}{2}\left(\chi_{1}+\rho_{1}+\tau_{1}\right)\\
 & =\frac{1}{2} & \left(r_{1,1}^{2}+r_{2,2}^{2}+r_{1,2}^{2}+\sqrt{\left(r_{1,1}^{2}-r_{2,2}^{2}-r_{1,2}^{2}\right)^{2}+4r_{1,1}^{2}r_{1,2}^{2}}\right)\;=\;\|R_{2}^{T}R_{2}\|.
\end{eqnarray*}
As we will see in the following, if $R_{k}$ is upper bidiagonal, it is possible to incrementally estimate $\left\Vert R_{k}\right\Vert $ and
$\left\Vert R_{k}^{-1}\right\Vert $ in a very efficient way, without storing the coefficients of the matrix~$R_{k}$ and even without storing the
approximate right singular vectors $z_{k}$. In particular, we will be able to find simple updating formulas for $\sigma_{k}$ and $\tau_{k}$ which are
then used in the updating formula for $\rho_{k+1}$.

\subsection{Specialization to upper bidiagonal matrices}
\label{sec:NormB}
Consider a bidiagonal matrix $B_{k}$,
\begin{equation}
B_{k}=\left[\begin{array}{ccccc}
\alpha_{1} & \beta_{1}\\
 & \alpha_{2} & \beta_{2}\\
 &  & \ddots & \ddots\\
 &  &  & \ddots & \beta_{k-1}\\
 &  &  &  & \alpha_{k}
\end{array}\right].\label{eq:bidiag}
\end{equation}
Having in mind relation~\eqref{eq:Rk} and taking $R_{k}=B_{k}$, the vector $v_{k}$ and the entry $\eta_{k}$ in the last column of $B_{k+1}$ are given
by $v_{k}=\beta_{k}e_{k}$, $\eta_{k}=\alpha_{k+1}$, where $e_{k}=[0,\dotsm,0,1]^{T}$ is the $k$th column of the $k\times k$ identity matrix. Hence
\[
\rho_{k}=\|B_{k}z_{k}\|^{2},\qquad\sigma_{k}=\alpha_{k}\beta_{k}e_{k}^{T}z_{k},\qquad\tau_{k}=\beta_{k}^{2}+\alpha_{k+1}^{2}.
\]
Note that the last entry $e_{k}^{T}z_{k}$ of the vector $z_{k}$
is given by $c_{k-1}$ (see \eqref{eq:z}), and, therefore, $\sigma_{k}=\alpha_{k}\beta_{k}c_{k-1}$.
Using the previous results, we are now able to update the entries
$\rho_{k}$, $\sigma_{k}$ and $\tau_{k}$ without storing the vector
$z_{k}$; see Algorithm \ref{alg:bidiagnorm}.

\begin{algorithm}[ht]
\caption{Incremental estimation of $\|B_{k}\|^{2}$}
\label{alg:bidiagnorm}

\begin{algorithmic}[0]

\STATE \textbf{input} entries $\alpha_{k}$ and $\beta_{k}$ of upper
bidiagonal matrices

\STATE $\rho_{1}=\alpha_{1}^{2}\mbox{ }$, $\rho_{1}^{\klein{\max}}=\rho_{1}$, $c_{0}=1$,

\FOR{$k=1,\dots$}

\STATE $\sigma_{k}^{2}=\alpha_{k}^{2}\beta_{k}^{2}c_{k-1}^{2}$,
$\tau_{k}=\beta_{k}^{2}+\alpha_{k+1}^{2}$

\STATE $\chi_{k}^{2}=\left(\rho_{k}-\tau_{k}\right)^{2}+4\sigma_{k}^{2}$

\STATE $c_{k}^{2}=\frac{1}{2}\left(1-\frac{\rho_{k}-\tau_{k}}{\chi_{k}}\right)$

\STATE $\rho_{k+1}=\rho_{k}+\chi_{k}c_{k}^{2}$

\STATE $\rho_{k+1}^{\klein{\max}}=\rho_{k+1}$

\ENDFOR

\end{algorithmic}
\end{algorithm}

In some cases, a better accuracy of the approximations to norms
of matrices is needed. To improve the accuracy,
we need to store $B_{k}$ and $z_{k}$ so that we can run Algorithm~\ref{alg:incremental}, and
construct the approximate maximum right singular vector
\begin{equation}\label{eq:zkp1}
z_{k+1}=\left[\begin{array}{c}
s_{k}z_{k}\\
c_{k}
\end{array}\right]
\end{equation}
of $B_{k+1}$. The vector $z_{k+1}$ can also be seen as an
approximate eigenvector of $B_{k+1}^{T}B_{k+1}$ corresponding to the approximate maximum eigenvalue $\rho_{k+1}$. Hence, one can improve
the vector $z_{k+1}$ using one shifted inverse iteration applied to the
matrix  $B_{k+1}^{T}B_{k+1}$, where $\rho_{k+1}$ is used as a shift; see, e.g., \cite[Section~7.6]{B:GoLo2013}.

In detail, having the $LDL^T$ factorization of the tridiagonal matrix $B_{k+1}^{T}B_{k+1}$, we can easily compute the $LDL^T$
factorization of the matrix $B_{k+1}^{T}B_{k+1}-\rho_{k+1}I$ using the \texttt{dstqds} algorithm  by Parlett and Dhillon \cite{PaDh2000}.
The last factorization can be used to perform one inverse iteration by solving the system
\[
(B_{k+1}^{T}B_{k+1}-\rho_{k+1}I) y = z_{k+1}.
\]
Finally, we can consider the vector $\widehat{z}_{k+1}\equiv y/\|y\|$ and the
scalar  $\widehat{\rho}_{k+1}\equiv \| B_{k+1} \widehat{z}_{k+1}\|^2$
to be new approximations to the maximum right singular vector and to the squared norm of $B_{k+1}$, and
$\widehat{\rho}_{k+1}^{\klein{\max}} \equiv \widehat{\rho}_{k+1}$ to be an improved estimate of the largest
eigenvalue of $B_{k+1}^T B_{k+1}$.

\subsection{Inversions of nonsingular upper bidiagonal matrices}
\label{sec:NormInvB}
Consider a nonsingular bidiagonal matrix $B_{k}$ of the form \eqref{eq:bidiag}, $\alpha_{i}\neq0$. It is well known that the last column $w_{k+1}$ of
the matrix $B_{k+1}^{-1}$ can be expressed in the explicit form
\[
w_{k+1}=\frac{1}{\alpha_{k+1}}\left[\begin{array}{ccccc}
\left(-1\right)^{k}\prod_{i=1}^{k}\frac{\beta_{i}}{\alpha_{i}}, & \dots, & \frac{\beta_{k-1}}{\alpha_{k-1}}\frac{\beta_{k}}{\alpha_{k}}, & -\frac{\beta_{k}}{\alpha_{k}}, & 1\end{array}\right]^{T}.
\]
Hence,
\begin{equation}
B_{k+1}^{-1}=\left[\begin{array}{cc}
B_{k}^{-1} & -w_{k}\frac{\beta_{k}}{\alpha_{k+1}}\\
 & \frac{1}{\alpha_{k+1}}
\end{array}\right],\qquad w_{k+1}=\frac{1}{\alpha_{k+1}}\left[\begin{array}{c}
-w_{k}\beta_{k}\\
1
\end{array}\right],\label{eq:Binv}
\end{equation}
where $w_{k}$ is the last column of the matrix $B_{k}^{-1}$. We
now specialize the idea of the incremental norm estimation presented
in Section \ref{sec:incremental} to the case of matrices $B_{k}^{-1}$,
that is,
\[
R_{k}=B_{k}^{-1},\quad v_{k}=-w_{k}\frac{\beta_{k}}{\alpha_{k+1}},\quad\eta_{k}=\frac{1}{\alpha_{k+1}}.
\]

First, let us find updating formulas for $\|w_{k+1}\|^{2}$ and $B_{k}^{-T}w_{k}$.
From \eqref{eq:Binv} it follows that
\begin{equation}
\|w_{k+1}\|^{2}=\frac{1}{\alpha_{k+1}^{2}}\left(\beta_{k}^{2}\|w_{k}\|^{2}+1\right),\label{eq:up1}
\end{equation}
and
\begin{equation}
B_{k}^{-T}w_{k}=\left[\begin{array}{cc}
B_{k-1}^{-T}\\
-w_{k-1}^{T}\frac{\beta_{k-1}}{\alpha_{k}} & \frac{1}{\alpha_{k}}
\end{array}\right]\left[\begin{array}{c}
-w_{k-1}\frac{\beta_{k-1}}{\alpha_{k}}\\
\frac{1}{\alpha_{k}}
\end{array}\right]=\left[\begin{array}{c}
-\frac{\beta_{k-1}}{\alpha_{k}}\left(B_{k-1}^{-T}w_{k-1}\right)\\
\|w_{k}\|^{2}
\end{array}\right].\label{eq:up2}
\end{equation}
Using the formulas \eqref{eq:up1} and \eqref{eq:up2} we are now
able to update the entries $\sigma_{k}$ and $\tau_{k}$ which are
needed in the process of the incremental norm estimation; see Section~\ref{sec:incremental}.
For $\tau_{k}$ we get
\[
\tau_{k}=\|w_{k+1}\|^{2}=\frac{1}{\alpha_{k+1}^{2}}\left(\beta_{k}^{2}\|w_{k}\|^{2}+1\right)=\frac{1}{\alpha_{k+1}^{2}}\left(\beta_{k}^{2}\tau_{k-1}+1\right),
\]
and for $\sigma_{k}$,
\begin{eqnarray*}
\sigma_{k} & = & v_{k}^{T}R_{k}z_{k}=-\frac{\beta_{k}}{\alpha_{k+1}}z_{k}B_{k}^{-T}w_{k}\\
 & = & -\frac{\beta_{k}}{\alpha_{k+1}}\left[\begin{array}{c}
s_{k-1}z_{k-1}\\
c_{k-1}
\end{array}\right]^{T}\left[\begin{array}{c}
-\frac{\beta_{k-1}}{\alpha_{k}}\left(B_{k-1}^{-T}w_{k-1}\right)\\
\|w_{k}\|^{2}
\end{array}\right]\\
 & = & -\frac{\beta_{k}}{\alpha_{k+1}}\left(s_{k-1}\left[-\frac{\beta_{k-1}}{\alpha_{k}}z_{k-1}^{T}B_{k-1}^{-T}w_{k-1}\right]+c_{k-1}\|w_{k}\|^{2}\right)\\
 & = & -\frac{\beta_{k}}{\alpha_{k+1}}\left(s_{k-1}\sigma_{k-1}+c_{k-1}\tau_{k-1}\right).
\end{eqnarray*}
The initial values
\[
 \rho_{1}=\frac{1}{\alpha_{1}^{2}},\quad\tau_{0}=\frac{1}{\alpha_{1}^{2}},\quad\sigma_{0}=0,\quad s_{0}=0,\quad c_{0}=1,
\]
lead to the $2\times2$ matrix
\[
\left[\begin{array}{cc}
\rho_{1} & \sigma_{1}\\
\sigma_{1} & \tau_{1}
\end{array}\right]=\left[\begin{array}{cc}
\frac{1}{\alpha_{1}^{2}} & -\frac{\beta_{1}}{\alpha_{2}\alpha_{1}^{2}}\\
-\frac{\beta_{1}}{\alpha_{2}\alpha_{1}^{2}} & \frac{1}{\alpha_{2}^{2}}+\left(\frac{\beta_{1}}{\alpha_{2}\alpha_{1}}\right)^{2}
\end{array}\right]=B_{2}^{-T}B_{2}^{-1},
\]
so that $\rho_{2}=\left\Vert B_{2}^{-1}\right\Vert ^{2}$. The results
are summarized in Algorithm~\ref{alg:Binv}.

\begin{algorithm}[ht]
\caption{Incremental estimation of $\|B_{k}^{-1}\|^{2}$}
\label{alg:Binv}

\begin{algorithmic}[0]

\STATE \textbf{input} entries $\alpha_{k}$ and $\beta_{k}$ of upper
bidiagonal matrices

\STATE $\rho_{1}=\alpha_{1}^{-2}$, $\rho_{1}^{\klein{\min}}=\alpha_{1}^{2}$,
$\tau_{0}=\rho_{1}$, $\sigma_{0}=0$,
$s_{0}=0$, $c_{0}=1$

\FOR{$k=1,\dots$}

\STATE $\sigma_{k}=-\frac{\beta_{k}}{\alpha_{k+1}}\left(s_{k-1}\sigma_{k-1}+c_{k-1}\tau_{k-1}\right)$,

\STATE $\tau_{k}=\frac{1}{\alpha_{k+1}^{2}}\left(\beta_{k}^{2}\tau_{k-1}+1\right)$

\STATE $\chi_{k}^{2}=\left(\rho_{k}-\tau_{k}\right)^{2}+4\sigma_{k}^{2}$

\STATE $c_{k}^{2}=\frac{1}{2}\left(1-\frac{\rho_{k}-\tau_{k}}{\chi_{k}}\right)$

\STATE $\rho_{k+1}=\rho_{k}+\chi_{k}c_{k}^{2}$

\STATE $s_{k}=\sqrt{1-c_{k}^{2}}$, $c_{k}=|c_{k}|\,\mathrm{sign}(\sigma_{k})$

\STATE $\rho_{k+1}^{\klein{\min}}=\rho_{k+1}^{-1}$
\ENDFOR

\end{algorithmic}
\end{algorithm}

Similarly as in the previous section, we can improve the accuracy of the approximations of norms of inverses of matrices by one shifted inverse
iteration. To do so, we need to store $B_{k}$,  $z_{k}$, and also the vector $B_{k}^{-T}w_{k}$ (to compute $\sigma_k$) which can be updated using the
formula \eqref{eq:up2}. Then, as in \eqref{eq:zkp1}, we can construct the approximate maximum right singular vector $z_{k+1}$ of $B_{k+1}^{-1}$. The
vector $z_{k+1}$ can be seen as an approximate eigenvector of the matrix $B_{k+1}^{-T}B_{k+1}^{-1}$, or, as an approximate
 eigenvector of the matrix $B_{k+1}B_{k+1}^{T}$,
\[
B_{k+1}^{-T}B_{k+1}^{-1} z_{k+1} \approx \rho_{k+1} z_{k+1},\qquad
\rho_{k+1}^{-1} z_{k+1} \approx
B_{k+1}B_{k+1}^{T} z_{k+1}.
\]
The accuracy  of the vector $z_{k+1}$ can now be improved by one shifted inverse iteration applied to the
matrix  $B_{k+1}B_{k+1}^{T}$, where $\rho_{k+1}^{-1}$ is used as a shift.

In detail, we can easily get
the  $UDU^T$ factorization ($U$ is upper bidiagonal) of the tridiagonal matrix $B_{k+1}B_{k+1}^T$. Using a straightforward modification of the
\texttt{dstqds} algorithm, the
  $UDU^T$ factorization of the matrix $B_{k+1}B_{k+1}^{T}-\rho_{k+1}^{-1}I$ can be computed
  and used to solve the system
\[
 (B_{k+1}B_{k+1}^{T}-\rho_{k+1}^{-1}I) y = z_{k+1}.
\]
The modification of the \texttt{dstqds} algorithm consists in the unitary transformation of the problem for the $UDU^T$ factorization to
the problem with $LDL^T$ factorization, using the backward identity matrix.
Finally, one can consider the vector $\widehat{z}_{k+1}\equiv y/\|y\|$ and the scalar  $\widehat{\rho}_{k+1}\equiv \| B_{k+1}^{-1} \widehat{z}_{k+1}\|^{2}$
to be new approximations to the maximum right singular vector and to $\|B_{k+1}^{-1}\|^2$, and $\widehat{\rho}_{k+1}^{\klein{\min}} \equiv \widehat{\rho}_{k+1}^{-1}$ to be an improved estimate of the smallest eigenvalues of $B_{k+1}^TB_{k+1}$.

\subsection{CG and approximations of the extreme Ritz values}
\label{sec:CGetreme}

The results of the previous sections can be applied to the upper bidiagonal matrices $B_{k}=L_{k}^{T}$ that are computed in CG, i.e.,
\[
\alpha_{j}=\frac{1}{\sqrt{\gamma_{j-1}}},\ j=1,\dots,k,\qquad\beta_{j}=\sqrt{\frac{\delta_{j}}{\gamma_{j-1}}},\ j=1,\dots,k-1,
\]
to approximate the smallest and largest eigenvalues of $T_{k}$; see \eqref{eq:why}. In particular, after substitution we obtain in Algorithm
\ref{alg:bidiagnorm},
\[
\sigma_{k}^{2}=\frac{\delta_{k}}{\gamma_{k-1}^{2}}c_{k-1}^{2},\qquad\tau_{k}=\frac{1}{\gamma_{j-1}}+\frac{\delta_{k}}{\gamma_{j-1}},
\]
and in Algorithm \ref{alg:Binv},
\begin{equation}\label{eq:coeff}
\sigma_{k}=-\sqrt{\gamma_{k}\frac{\delta_{k}}{\gamma_{k-1}}}\left(s_{k-1}\sigma_{k-1}+c_{k-1}\tau_{k-1}\right),\qquad\tau_{k}=\gamma_{k}\left(\delta_{k}\frac{\tau_{k-1}}{\gamma_{k-1}}+1\right).
\end{equation}
Moreover, for $\tau_{k}$ in Algorithm \ref{alg:Binv} it holds that
\[
\frac{\tau_{k}}{\gamma_{k}}=\left[1+\delta_{k}(1+\delta_{k-1}(1+\dots+\delta_{2}(1+\delta_{1})\dots))\right]=\frac{\left\Vert p_{k}\right\Vert ^{2}}{\left\Vert r_{k}\right\Vert ^{2}}.
\]

\section{Approximation of the Gauss-Radau upper bound}
 \label{sec:radau}
The previous section provides a cheap tool to approximate the Gauss-Radau upper bound without having an a priori information about the smallest eigenvalue of the (preconditioned) system matrix. In particular, to approximate the Gauss-Radau upper bound one can use the new upper bound \eqref{eq:newbound}. Instead of $\mu$ which should closely approximate the smallest eigenvalue from below,  one can use the updated approximation $\mu_k\equiv \rho_{k}^{\klein{\min}}$ to the smallest Ritz value; see Algorithm~\ref{alg:Binv} and  Section~\ref{sec:CGetreme}. Since the bound \eqref{eq:newbound} is not sensitive to the choice of $\mu$, the approximative bound \eqref{eq:newbound} which uses $\mu_k$ will be close to the bound \eqref{eq:newbound} for  $\mu=\lambda_{\min}$ whenever $\mu_k \approx\lambda_{\min}$. Moreover, as we have seen in Section~\ref{sec:bcsstk01}, the bound \eqref{eq:newbound} is often a good approximation to the Gauss-Radau upper bound, in particular if $\mu$ approximates the smallest eigenvalue only roughly, say to $1$ or $2$ valid digits. In summary, when we do not have an a priori information about the smallest eigenvalue of the (preconditioned) system matrix, we suggest to approximate the Gauss-Radau upper bound $u_k^{\smu}$, see \eqref{eq:denote}, using an approximate upper bound
\begin{equation}\label{eq:approx}
\pmb{u}_k^{\scriptscriptstyle (\mu_k)} = 	\frac{\|r_{k}\|}{\sqrt{\mu_k}}\frac{\|r_{k}\|}{\|p_{k}\|}
\end{equation}
where $\mu_k = \rho_{k}^{\klein{\min}}$ is updated at each iteration as
in Algorithm \ref{alg:Binv}, with $\sigma_{k}$ and $\tau_{k}$ computed directly from the CG coefficients using \eqref{eq:coeff}. The algorithm for updating $\mu_k$ starts with $\rho_{1}=\gamma_0$, $\mu_1=\gamma_0^{-1}$,
$\tau_{0}=\rho_{1}$, $\sigma_{0}=0$,
$s_{0}=0$, $c_{0}=1$. Note that it does not make too much sense to 
use inverse iterations to improve the quality of the approximation
of the smallest Ritz value. A more accurate approximation to the smallest Ritz value does not improve the bound \eqref{eq:approx} significantly.

\section{Approximation of the normwise backward error}
 \label{sec:be}
In \cite{PaSa1982,ArDuRu1992}, backward error perturbation theory was used to derive a family of stopping criteria for iterative methods. In particular, given $\widetilde x$, one can ask what are the norms of the smallest perturbations $\Delta A$ of $A$ and $\Delta b$ of $b$
measured in the relative sense such that
the approximate solution $\widetilde x$ represents the exact solution of
the perturbed system
\[
(A+\Delta A)\, \widetilde x =b+\Delta b\,.
\]
In other words, we are interested in the quantity
\[
\eta = \min \left\{\delta:\ (A+\Delta A)\, \widetilde x=b+\Delta b,\ \| \Delta A\| \leq \delta \| A\|,\ \| \Delta b\| \leq \delta \| b\| \right\}\,.
\]
It was shown by Rigal and Gaches \cite{RiGa1967} that this quantity, called {\em the normwise backward error}, is given by
\begin{equation}
\eta = \frac{\|\widetilde r\|}{\|A\|\|\widetilde x \|+\|b\|}\,. \label{eq:bck}
\end{equation}
where $\widetilde r = b-A\widetilde x$. This approach can be generalized, see \cite{PaSa1982,ArDuRu1992}, in order to quantify levels of confidence
in $A$ and $b$. The normwise backward
error is, as a base for stopping criteria, frequently recommended
in the numerical analysis literature, see, e.g. \cite{B:Hi1996,Ba1994}.

When solving a linear system with CG, the norms of vectors
$\widetilde x=x_k$ and $\widetilde r=r_k$ are easily computable, and
 $\|A\|$ can be approximated from below using Algorithm~\ref{alg:bidiagnorm}; see also Section~\ref{sec:CGetreme}. Hence, we can efficiently compute an approximate upper bound on the normwise backward error \eqref{eq:bck} in CG. In the following subsection we show that if $x_0=0$, then $\|x_k\|$ can be approximated cheaply in an incremental way.

\subsection{A cheap approximation of $\|x_k\|$ in CG}
\label{sec:normx}
If $x_0=0$, then the CG approximate solution $x_k$ can be expressed as
\[
x_{k}=\|r_{0}\|V_{k}T_{k}^{-1}e_{1},\quad\mbox{and} \quad
\|x_{k}\|^{2}=\|r_{0}\|^{2}e_{1}^{T}T_{k}^{-1}V_{k}^{T}V_{k}T_{k}^{-1}e_{1}.
\]
Using the {\em global orthogonality} among the Lanczos vectors we obtain
\begin{equation}\label{eq:global}
\|x_{k}\|^{2}=\|r_{0}\|^{2}e_{1}^{T}T_{k}^{-2}e_{1}.
\end{equation}
Note that in finite precision arithmetic, the orthogonality is usually quickly lost. However, we observed in numerical experiments (see Section~\ref{sec:exp}) that despite the loss or orthogonality, the quantity
\begin{equation}\label{eq:xik}
\xi_k \equiv \|r_{0}\|^{2}e_{1}^{T}T_{k}^{-2}e_{1}
\end{equation}
still approximates $\|x_{k}\|^{2}$ very accurately. In the following lemma we suggest an algorithm to efficiently compute $\xi_k$ at a negligible cost.

\begin{lemma}\label{lem:norm} With the notation introduced in Section \ref{sec:CG}, it holds that
\[
\xi_k=\sum_{j=0}^{k-1}\|r_{j}\|^{-2}\left(\sum_{i=j}^{k-1}\psi_{i}\right)^{2},\qquad \psi_{i}=\gamma_{i}\|r_{i}\|^{2},
\]
and $\xi_{k+1}$,
 $k=0,1,2,\dots$, can be computed using the recurrences
\begin{eqnarray}
\vartheta_{k+1} & = & \vartheta_{k}+\gamma_{k}\phi_{k}^{-1},
\label{eq:nx1}\\
\xi_{k+1} & = & \xi_{k}+\psi_{k}\left(\vartheta_{k+1}+\vartheta_{k}\right),
\label{eq:nx2}
\end{eqnarray}
where $\vartheta_{0}=0$, $\xi_{0}=0$, and  $\phi_{k}$ can be updated using \eqref{eq:phi}.
\end{lemma}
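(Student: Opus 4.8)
The plan is to establish the closed-form expression for $\xi_k$ first, and then derive the recurrences \eqref{eq:nx1}--\eqref{eq:nx2} from it. The key structural fact I would exploit is the Cholesky factorization $T_k = L_k L_k^T$ already derived in Section~\ref{sec:CG}, which lets me replace the awkward object $T_k^{-2}$ by something I can handle entrywise. Writing $\xi_k = \|r_0\|^2\, e_1^T T_k^{-2} e_1 = \|r_0\|^2\, \|T_k^{-1} e_1\|^2$, I would set $g = T_k^{-1} e_1$ and compute $g$ by back/forward substitution through the bidiagonal factors. Concretely, $T_k^{-1} e_1 = L_k^{-T} L_k^{-1} e_1$, so first solve $L_k h = e_1$ and then $L_k^T g = h$, using the explicit bidiagonal entries $\tfrac{1}{\sqrt{\gamma_{j-1}}}$ and $\sqrt{\delta_j/\gamma_{j-1}}$ of $L_k^T$ recorded in the factorization formula. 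The entries of $h$ and $g$ will come out as telescoping products/sums of the CG coefficients, and I expect the components of $g$ to be expressible through the partial sums $\sum_{i=j}^{k-1}\psi_i$ with $\psi_i = \gamma_i\|r_i\|^2$; squaring and weighting by $\|r_j\|^{-2}$ should reproduce the claimed sum. Here the identity \eqref{eq:minres}, which already expresses $\|r_k\|^2\phi_k = (\sum_{j=0}^k \|r_j\|^{-2})^{-1}$ in terms of the residual norms, is the natural bridge between the factor entries and the stated weights.

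Once the closed form $\xi_k=\sum_{j=0}^{k-1}\|r_j\|^{-2}\bigl(\sum_{i=j}^{k-1}\psi_i\bigr)^2$ is in hand, the recurrences follow by a direct bookkeeping argument. I would introduce the inner partial sums $S_{j,k}\equiv\sum_{i=j}^{k-1}\psi_i$ and observe that passing from $k$ to $k+1$ augments each existing $S_{j,k}$ by the single new term $\psi_k$ and adds one fresh outer term for $j=k$. Expanding $(S_{j,k}+\psi_k)^2 = S_{j,k}^2 + 2\psi_k S_{j,k} + \psi_k^2$ and summing over $j$ splits $\xi_{k+1}$ into $\xi_k$ plus correction terms. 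The auxiliary quantity $\vartheta_k$ is engineered precisely to accumulate the cross terms: I would identify $\vartheta_k$ with a weighted sum of the $S_{j,k}$ (something like $\vartheta_k = \sum_{j=0}^{k-1}\|r_j\|^{-2} S_{j,k}$, up to the exact indexing), verify that its update \eqref{eq:nx1} matches $\vartheta_{k+1}=\vartheta_k+\gamma_k\phi_k^{-1}$ using \eqref{eq:minres} to rewrite $\sum_{j=0}^k\|r_j\|^{-2}=\phi_k^{-1}\|r_k\|^{-2}$, and then check that the combination $\psi_k(\vartheta_{k+1}+\vartheta_k)$ in \eqref{eq:nx2} collects exactly the linear-and-quadratic correction terms produced by the expansion above.

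The main obstacle I anticipate is not the algebra of the recurrences but pinning down the correct closed form for the entries of $g = T_k^{-1}e_1$ from the bidiagonal back-substitution, and matching them cleanly to the partial sums $\sum_{i=j}^{k-1}\psi_i$; the telescoping is delicate because the factors $\sqrt{\delta_j/\gamma_{j-1}}$ interleave with $1/\sqrt{\gamma_{j-1}}$ and one must keep the ratios $\delta_j$ and the residual norms $\|r_j\|^2 = \delta_j\|r_{j-1}\|^2$ properly synchronized. Once the weights $\|r_j\|^{-2}$ emerge correctly, the identification of $\vartheta_k$ and the verification of \eqref{eq:nx1} via \eqref{eq:minres} is the second place demanding care, since the update $\gamma_k\phi_k^{-1}$ must be shown to equal the incremental change in the cross-term accumulator. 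After these two matching steps the recurrence \eqref{eq:nx2} is a routine consequence of the quadratic expansion, and the initial conditions $\vartheta_0=\xi_0=0$ follow by inspection from the empty-sum convention.
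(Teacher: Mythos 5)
Your proposal follows the paper's proof essentially verbatim: the closed form for $\xi_k$ is obtained by solving the bidiagonal systems coming from $T_k=L_kL_k^T$ to get the entries of $\|r_0\|T_k^{-1}e_1$ as weighted partial sums of the $\psi_i$, and the recurrences then follow by expanding the square, defining $\vartheta_k=\sum_{j=0}^{k-1}\|r_j\|^{-2}\sum_{i=j}^{k-1}\psi_i$, and invoking \eqref{eq:minres} to identify the update $\gamma_k\phi_k^{-1}$ — exactly as in the paper. The only (immaterial) difference is that you verify \eqref{eq:nx1} directly from $\sum_{j=0}^{k}\|r_j\|^{-2}=\phi_k^{-1}\|r_k\|^{-2}$, whereas the paper splits off the $j=k$ term and uses the $\phi$-recurrence \eqref{eq:phi}; both are the same identity.
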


\begin{proof}
It holds that
\[
\xi_{k}=\|r_{0}\|^{2}e_{1}^{T}T_{k}^{-2}e_{1}=\|\|r_{0}\|L_{k}^{-T}L_{k}^{-1}e_{1}\|^{2}\equiv\|y\|^{2}
\]
where $y=[y_{1},\dots,y_{k}]^{T}$ solves the system $L_{k}^{T}L_{k}y=\|r_{0}\|e_{1}.$
Using the bidiagonal structure of $L_{k}$ we get in a straightforward
way that
\[
y_{j}=\left(-1\right)^{j+1}\frac{1}{\|r_{j-1}\|}\left(\sum_{i=j-1}^{k-1}\psi_{i}\right),\qquad j=1,\dots,k,
\]
and, therefore,
\[
\xi_{k}=\|y\|^{2}=\sum_{j=0}^{k-1}\frac{\left(\sum_{i=j}^{k-1}\psi_{i}\right)^{2}}{\|r_{j}\|^{2}}.
\]

It remains to find a way how to compute $\xi_{k}$ in an efficient
way. In other words, knowing $\xi_{k}$ and $\psi_{k}$, we would
like to express $\xi_{k+1}.$ It holds that
\begin{eqnarray*}
\xi_{k+1}=\sum_{j=0}^{k}\frac{\left(\sum_{i=j}^{k}\psi_{i}\right)^{2}}{\|r_{j}\|^{2}} & = &  \frac{\psi_{k}^{2}}{\|r_{k}\|^{2}}+\sum_{j=0}^{k-1}\frac{\psi_{k}^{2}}{\|r_{j}\|^{2}}+\sum_{j=0}^{k-1}\frac{2\psi_{k}\sum_{i=j}^{k-1}\psi_{i}}{\|r_{j}\|^{2}}+\xi_{k}\\
 & = & \psi_{k}\left(\sum_{j=0}^{k}\frac{\sum_{i=j}^{k}\psi_{i}}{\|r_{j}\|^{2}}+\sum_{j=0}^{k-1}\frac{\sum_{i=j}^{k-1}\psi_{i}}{\|r_{j}\|^{2}}\right)+\xi_{k}\\
 &=&\xi_{k}+\psi_{k}\left(\vartheta_{k+1}+\vartheta_{k}\right)
\end{eqnarray*}
where
\[
\vartheta_{k}\equiv\sum_{j=0}^{k-1}\frac{\sum_{i=j}^{k-1}\psi_{i}}{\|r_{j}\|^{2}}.
\]

Let us find an updating formula for $\vartheta_{k+1}$. We have,
\begin{eqnarray*}
\vartheta_{k+1} & = & \gamma_{k}+\sum_{j=0}^{k-1}\frac{\sum_{i=j}^{k-1}\psi_{i}+\psi_{k}}{\|r_{j}\|^{2}}=\gamma_{k}+\vartheta_{k}+\psi_{k}\sum_{j=0}^{k-1}\frac{1}{\|r_{j}\|^{2}}\\
 & = & \gamma_{k}+\vartheta_{k}+\gamma_{k}\frac{\|r_{k}\|^{2}}{\|r_{k-1}\|^{2}}\left(\|r_{k-1}\|^{2}\sum_{j=0}^{k-1}\|r_{j}\|^{-2}\right)\\
 & = & \gamma_{k}+\vartheta_{k}+\gamma_{k}\delta_{k}\phi_{k-1}^{-1}\\
 & = & \vartheta_{k}+\gamma_{k}\phi_{k}^{-1};
\end{eqnarray*}
see \eqref{eq:phi} and \eqref{eq:minres}.
\end{proof}

Lemma~\ref{lem:norm} shows how to cheaply approximate $\|x_k\|$ in CG under the assumption $x_0=0$. 
If $x_0\neq 0$, then $x_k = x_0 +
\|r_{0}\| V_{k}T_{k}^{-1}e_{1}$ and $\xi_k$ can be seen as an approximation to $\|x_k-x_0\|^2$. By a simple algebraic manipulation we can express $\|x_k\|^2$ as
\begin{equation}\label{eq:term}
\|x_k\|^2 = \|x_0\|^2 + 2 \|r_{0}\| x_0^T V_{k}T_{k}^{-1}e_{1} + \xi_k.
\end{equation}
The term $(V_{k}^T x_0)^TT_{k}^{-1}e_{1}$ can be evaluated incrementally, without storing the Lanczos vectors. However, this requires the computation
of one additional inner product per iteration. While in CG, it is then better to compute directly $\|x_k\|^2$, the term \eqref{eq:term} can still be useful
in PCG where norms of preconditioned approximations can be of interest when approximating the normwise backward error which corresponds to the preconditioned system.

\begin{algorithm}[ht]
\caption{Preconditioned Conjugate gradients (PCG)} \label{alg:pcg}
\begin{algorithmic}[0]
\STATE \textbf{input} $A$, $b$, $x_{0}$, $M$,
\STATE $r_{0}=b-Ax_{0}$, {\tt solve} $M z_{0}= r_{0}$ {\tt to get} $z_0$, $p_{0}=z_{0}$
\FOR{$k=1,\dots$ {\tt until convergence}}
\STATE $\widehat{\gamma}_{k-1}=\frac{z_{k-1}^{T}r_{k-1}}{p_{k-1}^{T}Ap_{k-1}}$
\STATE $x_{k}=x_{k-1}+\widehat{\gamma}_{k-1}p_{k-1}$ \STATE $r_{k}=r_{k-1}-\widehat{\gamma}_{k-1}Ap_{k-1}$
\STATE {\tt solve} $M z_{k}= r_{k}$ {\tt to get} $z_k$
\STATE $\widehat{\delta}_{k}=\frac{z_{k}^{T}r_{k}}{z_{k-1}^{T}r_{k-1}}$ \STATE $p_{k}=z_{k}+\widehat{\delta}_{k}p_{k-1}$
 \ENDFOR
\end{algorithmic}%
\end{algorithm}
\subsection{Normwise backward error in PCG}\label{sec:beerPCG}
Given a symmetric positive definite matrix $M=LL^T$ we can formally think about preconditioned CG (see Algorithm~\ref{alg:pcg}) as CG applied to the
modified system
\begin{equation}
 \label{eq:prec}
 \underbrace{L^{-1}A L^{-T}}_{\widehat{A}} \underbrace{L^T x}_{\widehat{x}} = \underbrace{L^{-1} b}_{\widehat{b}}.
\end{equation}
Moreover, a change of variable is used to go back to the original variable $x$ and the original residual $r$ in such a way that the only
preconditioning matrix which is involved is $M$ or its inverse, and not $L$ which may be unknown. Using the techniques presented in
Sections~\ref{sec:Ritz_val} and \ref{sec:normx} we can approximate the normwise backward error for the preconditioned system \eqref{eq:prec},
\begin{equation}\label{eq:pberr}
 \widetilde \eta = \frac{\Vert\widetilde r\Vert}{\Vert \widehat{A} \Vert\,\Vert \widetilde x\Vert + \Vert \widehat{b}\Vert},
\end{equation}
where $\widetilde x$ is a given approximation and $\widetilde r = \widehat{b} - \widehat{A} \widetilde x$. In particular, in PCG we are interested in
$\widetilde x=L^T x_k$, $\widetilde r=L^{-1}r_k$, so that
\begin{equation}\label{eq:precond}
    \Vert\widetilde x\Vert^2 = \Vert x_k \Vert_M^2,\quad
    \Vert\widetilde r\Vert^2 = z_k^Tr_k = \Vert r_k \Vert_{M^{-1}}^2,\quad
    \Vert\widehat b\Vert^2 = \Vert b \Vert_{M^{-1}}^2.
\end{equation}
The norm of the preconditioned matrix $\Vert \widehat{A}\Vert$ can be approximated from the PCG coefficients $\widehat{\gamma}_k$ and $\widehat{\delta}_k$ using techniques developed
in Section~\ref{sec:Ritz_val}, and the norm of the preconditioned approximation $\Vert L^{T}x_k \Vert = (x_k^T M x_k)^{1/2} = \Vert x_k \Vert_M $ can be approximated using Lemma~\ref{lem:norm}. The other quantities are available in PCG.

We know that $\widetilde x$ is the exact solution of a perturbed problem $(\widehat A +\Delta \widehat A)\widetilde x = \widehat b + \Delta \widehat b$, where the relative sizes of $\Delta \widehat A$ and $\Delta \widehat b$ are bounded by $\widetilde \eta$. Hence, $x_k$ is
the exact solution of the perturbed system
\[
(A + \widehat\Delta_A) x_k =  b + \widehat\Delta_b,\qquad
\widehat\Delta_A  \equiv L(\Delta \widehat A )L^{T},\qquad \widehat\Delta_b \equiv L (\Delta \widehat b)\,.
\]
Since the relative sizes of  $\Delta \widehat A$ and $\Delta \widehat b$ are bounded by $\widetilde \eta$, it holds that $\Vert \widehat\Delta_A \Vert / \Vert A \Vert \leq
\kappa(M)\widetilde \eta$ and $\Vert \widehat\Delta_b \Vert / \Vert b \Vert \leq \kappa(M)^{1/2}\widetilde \eta.$

Nevertheless, the question of which backward error makes more sense in a given problem remains. The quantity $\eta$ in \eqref{eq:bck} tells us how well we have solved
the original system whence $\widetilde\eta$ in \eqref{eq:pberr} tells us how well we have solved the preconditioned system. We did not find any
discussion of this issue in the literature.

\section{Numerical experiments}
 \label{sec:exp}
Numerical experiments are divided into two parts. In the first part we
demonstrate the quality of our estimates approximating the extreme Ritz values and the norms of approximate solutions during the CG computations. In the second part we use
these estimates to approximate characteristics of our interest, i.e., the Gauss-Radau upper bound for the $A$-norm of the error and the normwise backward error.
The experiments are performed in Matlab 9.2 (R2017a).

We consider four systems of linear equations. The first one with the system matrix {\tt bcsstk01} has already been described in Section~\ref{sec:bcsstk01}. For this system, the influence of finite precision arithmetic to CG computations is substantial; orthogonality is quickly lost and convergence is significantly delayed. Hence, one can test whether our techniques work also under these circumstances which are quite realistic during practical computations.

The second system arises after discretizing the diffusion equation
$$
-\mathrm{div}(\lambda(x,y)\nabla u)=f\ \ \mbox{in}\ \ \Omega=(0,1)^2,\quad
u|_{\partial \Omega}=0,
$$
with the diffusion coefficient
$$
    \lambda(x,y) = \frac{1}{2 + 1.8 \sin(10x)}\cdot
    \frac{1}{2 + 1.8  \sin(10\,y)}\,.
$$
The PDE is discretized using standard finite differences with a five-point scheme on a $60 \times 60$ mesh so that the system matrix {\tt Pb26} has the moderate dimension $3600$;
for more details see \cite[Section~9.2, p. 313]{B:Me2006}.
Note that $\mathrm{nnz}(A)=17760$ and $\kappa(A) \approx 7.54\times 10^4$.
The right hand side $b$ is a random vector normalized to have a unit norm.
The starting vector is $x_0=0$. In the experiments, the system is solved without preconditioning.

The third linear system {\tt Pres\_Poisson} from the 
SuiteSparse Matrix Collection arises in problems of computational fluid dynamics. The matrix size is $n=14822$, $\mathrm{nnz}(A)=715804$, $\kappa(A)\approx 2.04 \times 10^6$, the right hand side $b$ is provided with the matrix. The starting vector is $x_0=0$. We use incomplete Cholesky factorization with zero-fill
as a preconditioner; see, e.g., \cite[Section~11.5.8]{B:GoLo2013}.

Finally, the last system matrix {\tt s3dkt3m2} is of order $n = 90449$ and $\kappa(A) \approx 3.6 \times 10^{11}$. It can be downloaded from the
CYLSHELL collection in the Matrix Market library, which
contains matrices that represent low-order finite-element discretizations of a shell element test, the pinched
cylinder. Only the last element of the right-hand side vector $b$ is nonzero, which corresponds to the given
physical problem; for more details see \cite{Ko1999} and the references therein. The factor $L$ in the preconditioner
$M = LL^T$ is determined by the incomplete Cholesky (ichol) factorization with threshold dropping,
{\tt type = 'ict'}, {\tt droptol = 1e-5}, and with the global diagonal shift {\tt diagcomp = 1e-2}. Note that here
$\mathrm{nnz}(A)=3686223$ and $\mathrm{nnz}(L)=6541916$. When used in  experiments, the smallest eigenvalue of the preconditioned matrix 
was computed as the smallest Ritz value at the iteration $k=3500$ for which the ultimate level of accuracy of the $A$-norm of the error was already reached.

\subsection{Approximations to the extreme Ritz values and to $\|x_k\|$} \label{sec:extreme}
It is sometimes difficult to know beforehand good approximations of the smallest and largest eigenvalues of $A$. Since CG is equivalent to the
Lanczos algorithm, estimates of the smallest and largest eigenvalues can be computed during CG iterations via approximating the smallest and largest Ritz value.
In Algorithms~\ref{alg:bidiagnorm} and \ref{alg:Binv}, and in
Section~\ref{sec:CGetreme} we formulated a very cheap way of approximating the extreme Ritz values at a negligible cost of a few scalar operations per iteration.
Moreover, the estimates can be improved when updating the $LDL^T$ factorization of the tridiagonal matrix $T_k$ and performing one shifted inverse iteration; see Sections~\ref{sec:NormB} and \ref{sec:NormInvB}.

Note that an adaptive algorithm for approximating
the smallest eigenvalue was also proposed in \cite{Me1997}, with the aim
to get the parameter $\mu$ for computing the
Gauss-Radau bound. The user was required to provide an initial lower bound
for $\lambda_{\min}(A)$. Then, during the CG iterations
the smallest Ritz values were computed using a fixed number of inverse iterations. When the smallest Ritz value was considered to be converged, the value of $\mu$ was changed to the converged value. However, this required solving several tridiagonal linear systems at every CG iteration and the size of these linear systems was increasing with the CG iterations.
Therefore, we can do now something better with our new cheap estimates, as well as with the improved estimates which require solving of just one linear system per iteration.

Let us first describe the meaning of curves in Figures~\ref{fig3}-\ref{fig6}. The left and right parts of the figures correspond to approximations of the largest and smallest eigenvalue, respectively. Denote by $\theta_1^{\klein{(k)}},\dots,\theta_k^{\klein{(k)}}$
the eigenvalues of $T_k$, i.e., the Ritz values, sorted in nondecreasing
order, which we compute 
using the Matlab command {\tt eig}. We plot the convergence history of the relative distance of the largest or smallest Ritz value to the largest or smallest eigenvalue of $A$ respectively, i.e., the quantities
\[
\frac{|\lambda_{\max}(A)-\theta_k^{\klein{(k)}}|}{\lambda_{\max}(A)},\qquad \frac{|\lambda_{\min}(A)-\theta_1^{\klein{(k)}}|}{\lambda_{\min}(A)},
\]
as a dash-dotted curve.
The dashed and dotted curves are related to the relative accuracy of the estimates of the largest or smallest Ritz value,
\[
\frac{|\theta_k^{\klein{(k)}} - \mathrm{est}_k^{\klein{\max}}|}{\theta_k^{\klein{(k)}}},\qquad \frac{|\theta_1^{\klein{(k)}} - \mathrm{est}_k^{\klein{\min}}|}{\theta_1^{\klein{(k)}}},
\]
where $\mathrm{est}_k^{\klein{\max}}$ stands for $\rho_k^{\klein{\max}}$ or $\widehat{\rho}_k^{\klein{\max}}$, and $\mathrm{est}_k^{\klein{\min}}$ stands for
$\rho_k^{\klein{\min}}$ or $\widehat{\rho}_k^{\klein{\min}}$.
In particular, the dashed curves correspond to the relative accuracy of the cheap estimates  $\rho_k^{\klein{\max}}$ and $\rho_k^{\klein{\min}}$
computed by Algorithms \ref{alg:bidiagnorm} and \ref{alg:Binv} respectively,
while the dotted curve corresponds
to the relative accuracy of the improved estimates  $\widehat{\rho}_k^{\klein{\max}}$ and $\widehat{\rho}_k^{\klein{\min}}$, described in Section \ref{sec:NormB} and \ref{sec:NormInvB}.
Finally, the relative distances of the cheap estimates 
${\rho}_k^{\klein{\max}}$ and ${\rho}_k^{\klein{\min}}$
to the largest and smallest eigenvalues, i.e.,
\[
\frac{|\lambda_{\max}(A)-{\rho}_k^{\klein{\max}}|}{\lambda_{\max}(A)},\qquad \frac{|\lambda_{\min}(A)-{\rho}_k^{\klein{\min}}|}{\lambda_{\min}(A)},
\]
are plotted as a solid curve. Note that
$
\lambda_{\min}(A) < \theta_1^{\klein{(k)}} \leq \mathrm{est}_k^{\klein{\min}}$
and
$
\mathrm{est}_k^{\klein{\max}} \leq  \theta_k^{\klein{(k)}}< \lambda_{\max}(A)
$.

\begin{figure}
\centering
 \includegraphics[width=5.8cm]{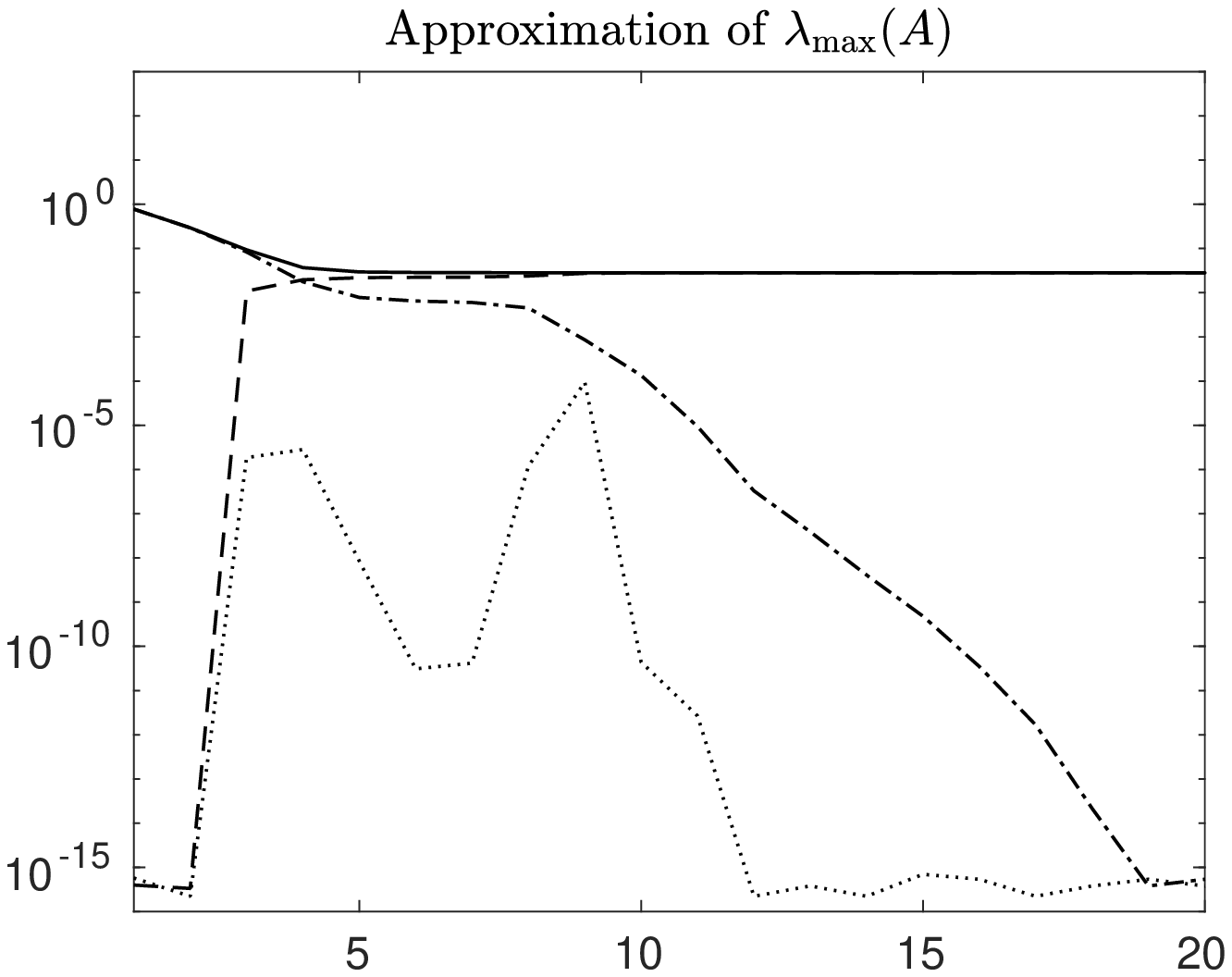}~~~
 \includegraphics[width=5.8cm]{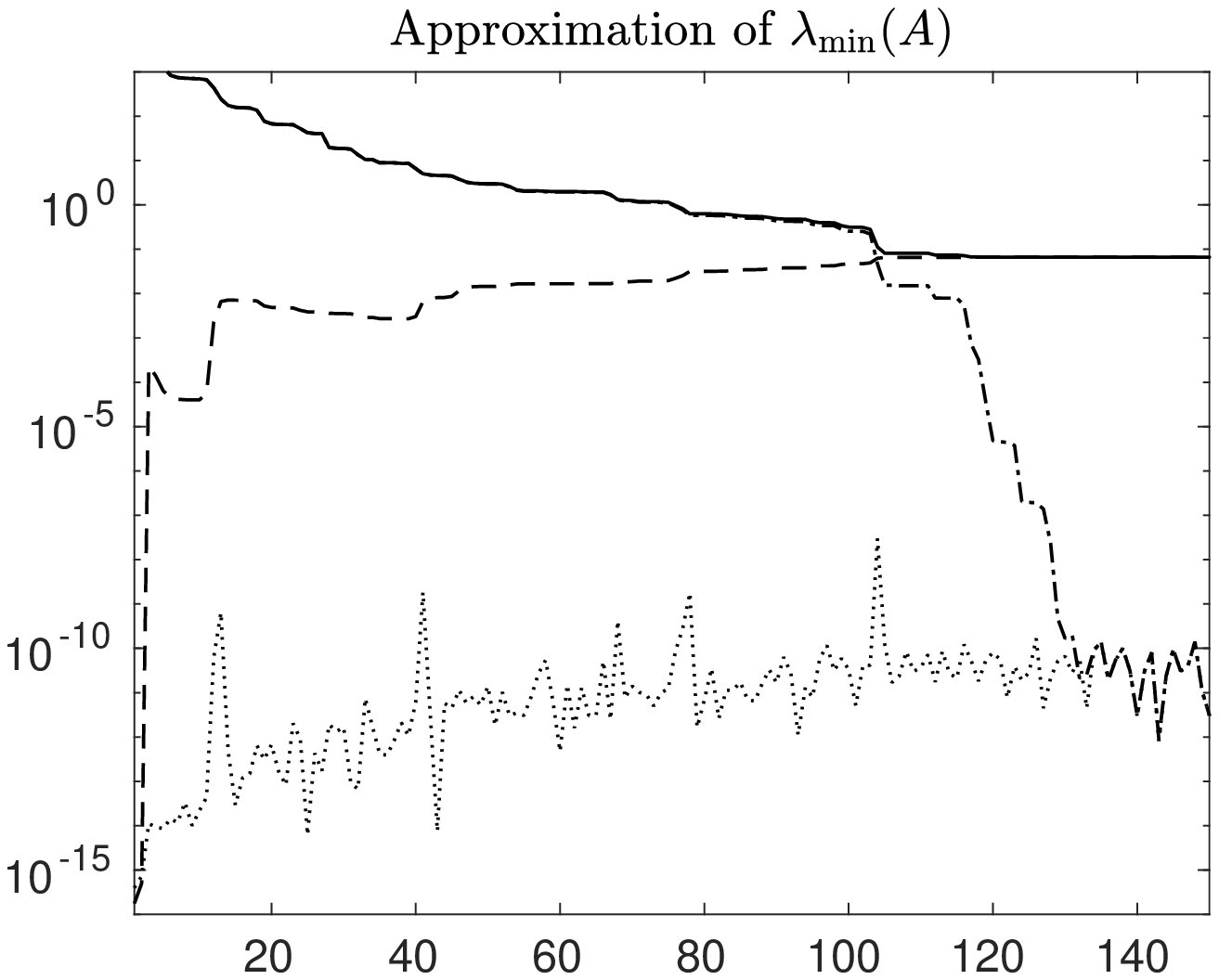}
\caption{Approximating the extreme Ritz values for the system {\tt bcsstk01}.} \label{fig3}
\end{figure}
\begin{figure}[!htbp]
\centering
 \includegraphics[width=5.8cm]{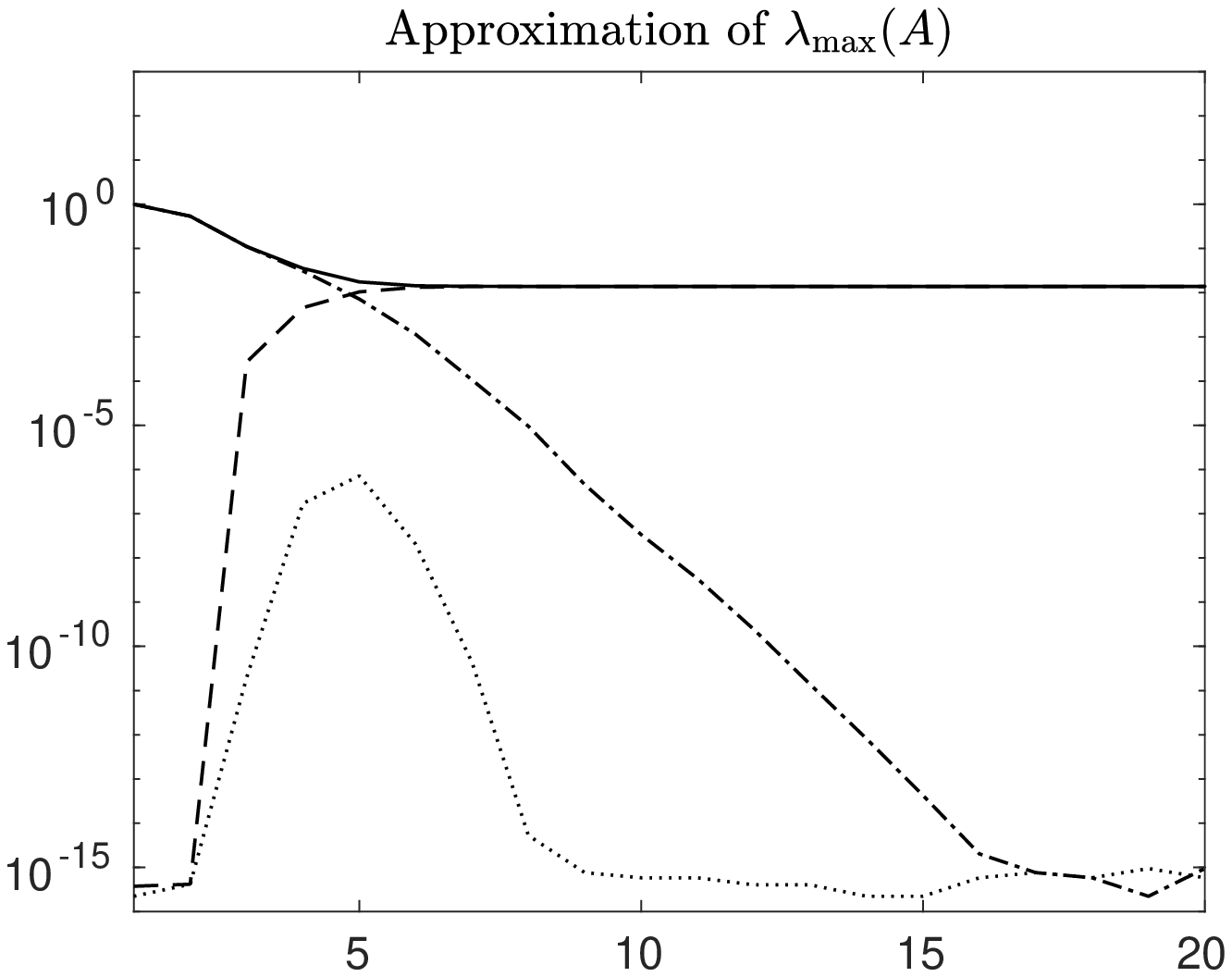}~~~
 \includegraphics[width=5.8cm]{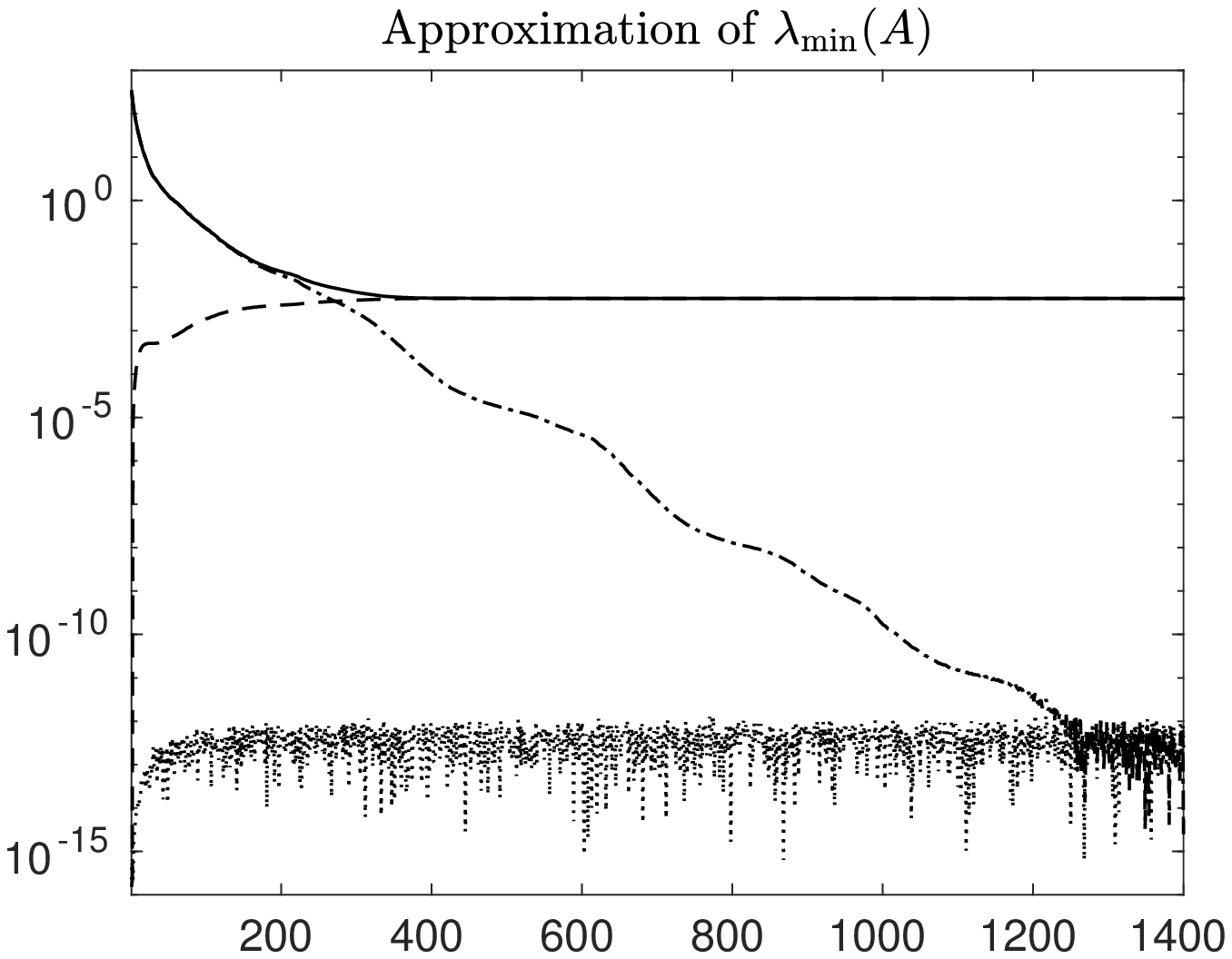}
\caption{Approximating the extreme Ritz values for the system {\tt Pb26}.} \label{fig4}
\end{figure}

In Figures \ref{fig3}-\ref{fig4} we can observe that if CG is applied to an {\em unpreconditioned system}, the largest Ritz values $\theta_k^{\klein{(k)}}$ converge to $\lambda_{\max}(A)$ after a few iterations of CG (dash-dotted curve in the left part), while
 convergence of the smallest Ritz values $\theta_1^{\klein{(k)}}$ to $\lambda_{\min}(A)$
(dash-dotted curve in the right part) is often delayed, and it is usually related to the convergence of the $A$-norm of the error.

In a few initial iterations, the cheap estimates $\rho_k^{\klein{\max}}$ and $\rho_k^{\klein{\min}}$ (dashed curves) approximate the corresponding Ritz values
with a very high accuracy (in theory, the estimates 
agree with the exact Ritz values in iterations 1 and 2). However, in later iterations, their relative accuracy stagnates on the level of $10^{-1}$ or $10^{-2}$. In other words, 
the estimates agree with the corresponding Ritz values to 1 or 2 valid digits.
 Since the extreme Ritz values approximate the extreme eigenvalues of $A$,  the estimates also approximate these eigenvalues.
We can observe that if an extreme Ritz value has converged, then
its cheap estimate approximates the corresponding extreme eigenvalue to
1 or 2 valid digits (solid curve). Note that in most applications, this would be a sufficient accuracy.
The dotted curves show the relative accuracy of the improved estimates $\widehat{\rho}_k^{\klein{\max}}$ and $\widehat{\rho}_k^{\klein{\min}}$ of the corresponding extreme Ritz values. The experiments predict that at the cost of computing one linear system with the tridiagonal matrix available in the form of $LDL^T$ factorization, the accuracy of the estimates can be significantly improved.

\begin{figure}
\centering
 \includegraphics[width=5.8cm]{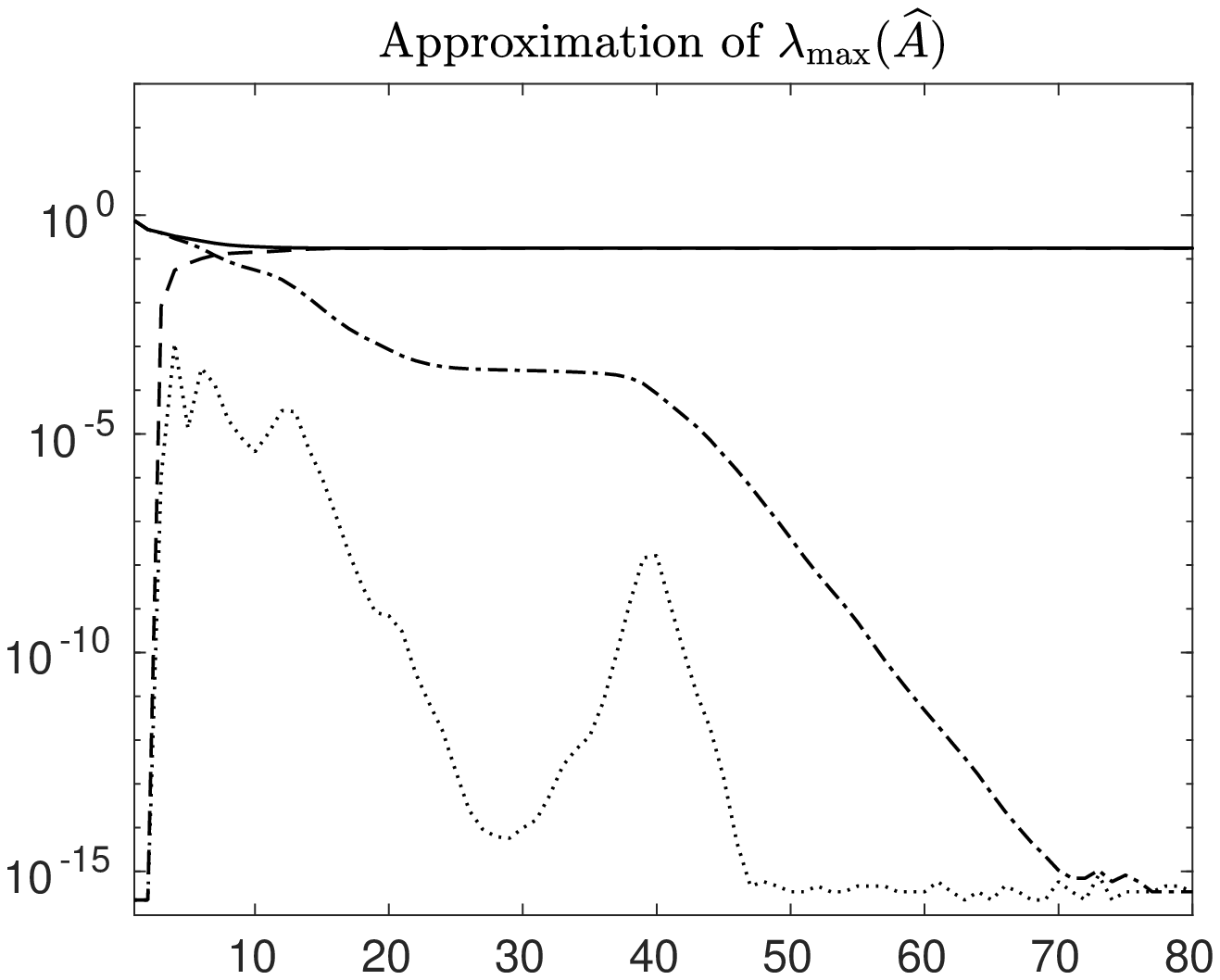}~~~
 \includegraphics[width=5.8cm]{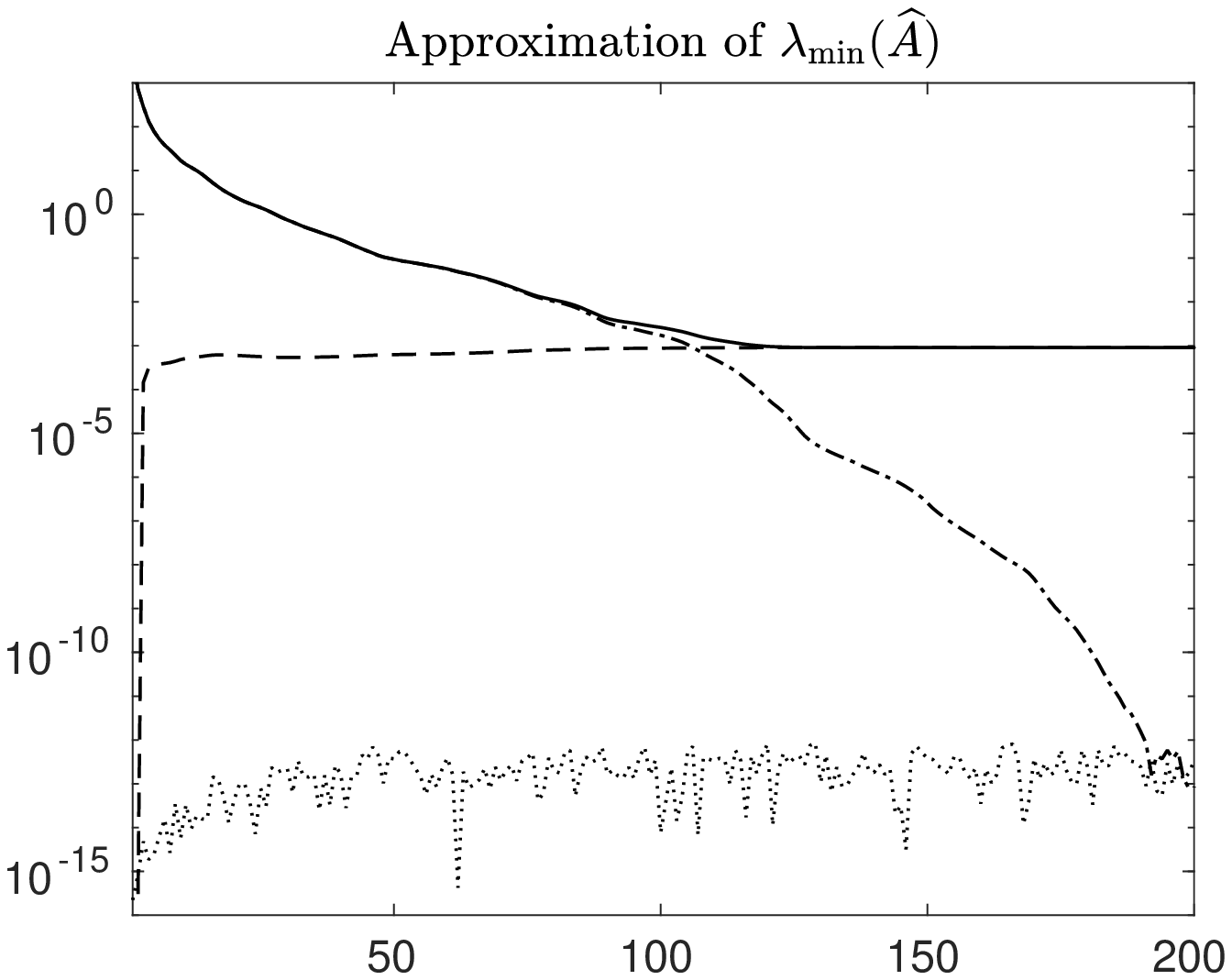}
 \caption{Approximating the extreme Ritz values for the preconditioned system {\tt Pres\_Poisson}.}
\label{fig5}
\end{figure}
\begin{figure}[!htbp]
\centering
 \includegraphics[width=5.8cm]{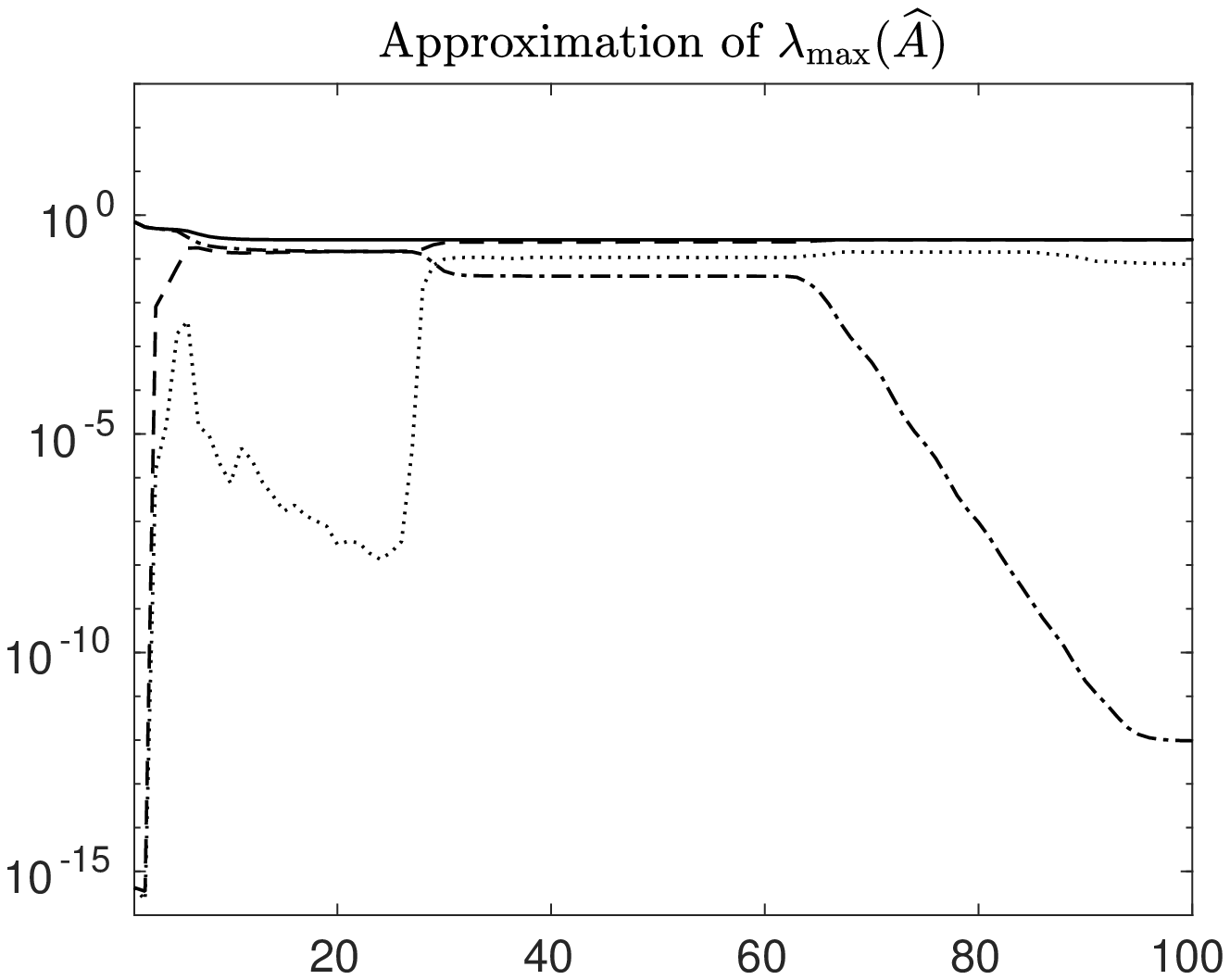}~~~
 \includegraphics[width=5.8cm]{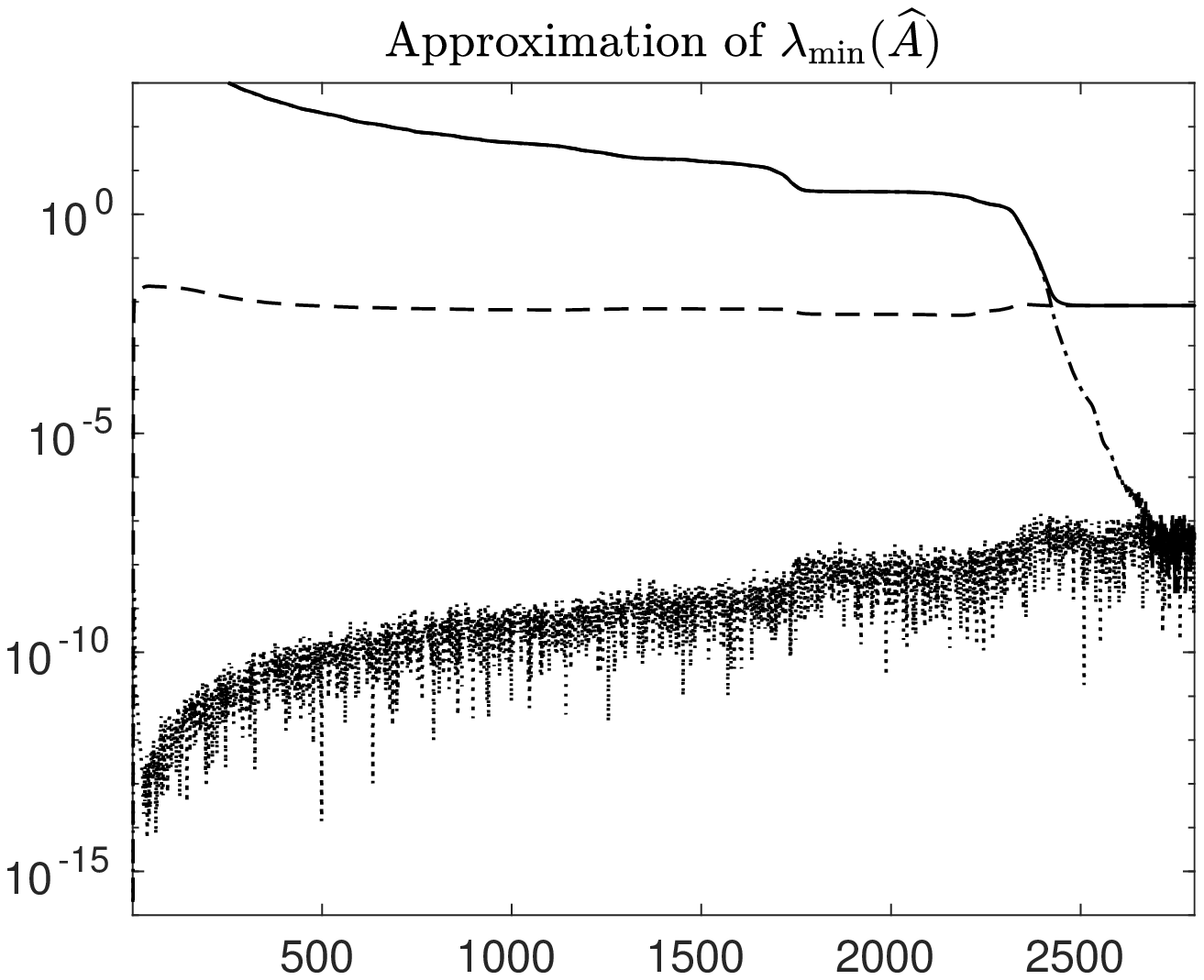}
\caption{Approximating the extreme Ritz values for the preconditioned system {\tt s3dkt3m2}.}
\label{fig6}
\end{figure}

A similar picture can be seen for preconditioned systems; see
Figures \ref{fig5}-\ref{fig6}. Recall that if we precondition the system, the extreme Ritz values approximate the extreme eigenvalues of the {\em preconditioned} matrix $\widehat{A}$.
%
%
%
As we can see, convergence of
$\theta_k^{\klein{(k)}}$ to $\lambda_{\max}(\widehat{A})$
to full precision accuracy is for the preconditioned systems significantly delayed. This is due to the fact that the preconditioned matrix has often a cluster of eigenvalues, which corresponds to the largest eigenvalue. Then, the power method as well as the Lanczos method (or CG) need more iterations to approximate the largest eigenvalue accurately. Moreover, a cluster 
of eigenvalues about the largest eigenvalue leads to a cluster of Ritz values which approximate the largest eigenvalue, and, as a consequence, 
the improved estimates $\widehat{\rho}_k^{\klein{\max}}$ (dotted curve) based on inverse iterations often do not improve the accuracy of the approximation significantly.

Similarly as in the unpreconditioned case, the cheap estimates
$\rho_k^{\klein{\max}}$ and $\rho_k^{\klein{\min}}$
approximate the corresponding Ritz values with a high relative accuracy in a few initial iterations (dashed curve), but in later iterations the relative accuracy is getting worse and stagnates on the level of about $10^{-1}$ or $10^{-2}$. As a result, one can expect that 
in later iterations, the estimates $\rho_k^{\klein{\max}}$ and $\rho_k^{\klein{\min}}$ can approximate the largest and smallest 
eigenvalues of $\widehat{A}$ also with the relative accuracy of 
$10^{-1}$ or $10^{-2}$ (solid curve).

\begin{figure}
\centering
 \includegraphics[width=5.8cm]{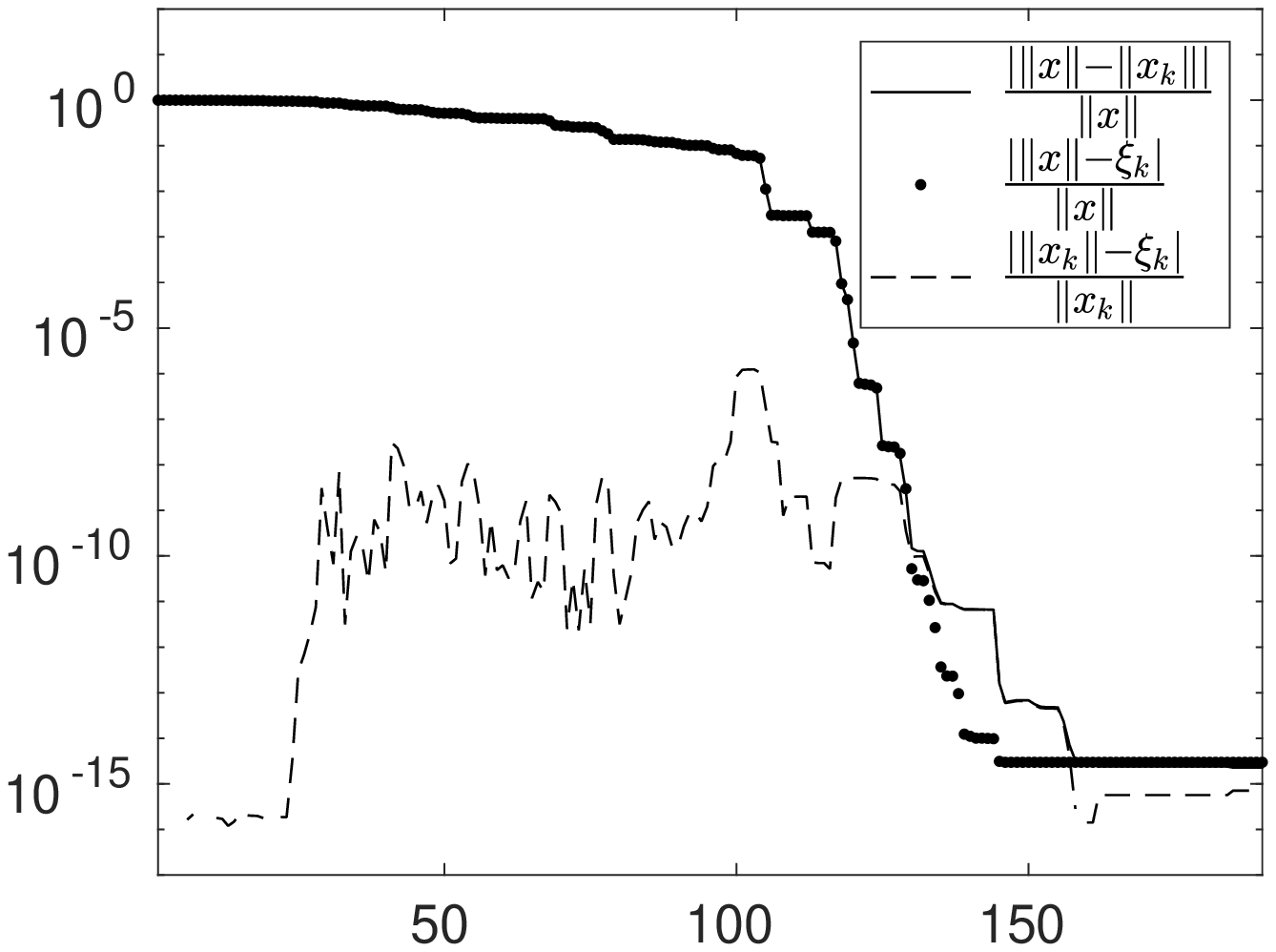}~~~
 \includegraphics[width=5.8cm]{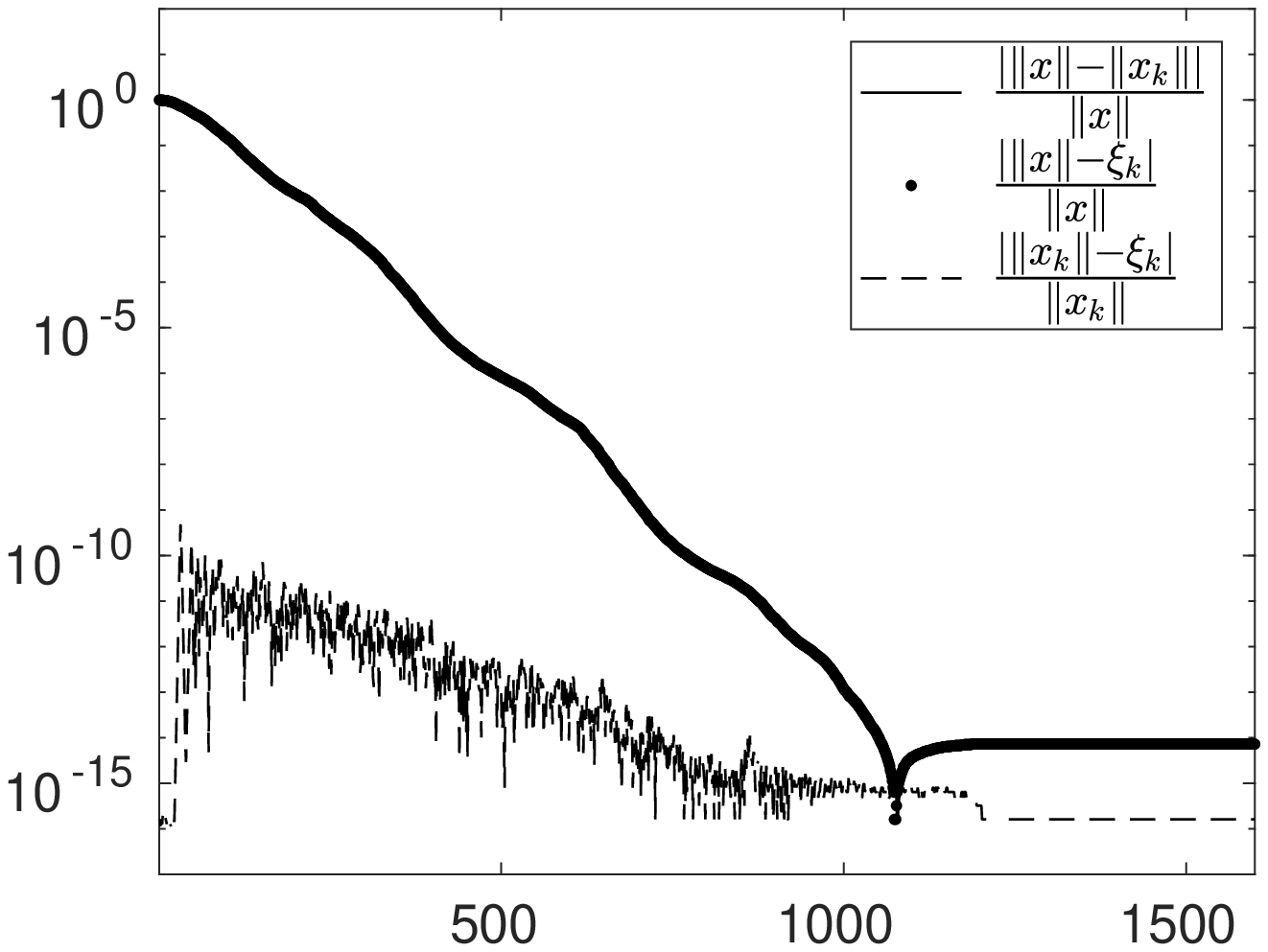}
\caption{Approximating $\|x_k\|$ using $\xi_k$ when solving
the unpreconditioned systems {\tt bcsstk01} (left part) and {\tt Pb26} (right part).}
\label{fig7}
\end{figure}
\begin{figure}[!htbp]
\centering
 \includegraphics[width=5.8cm]{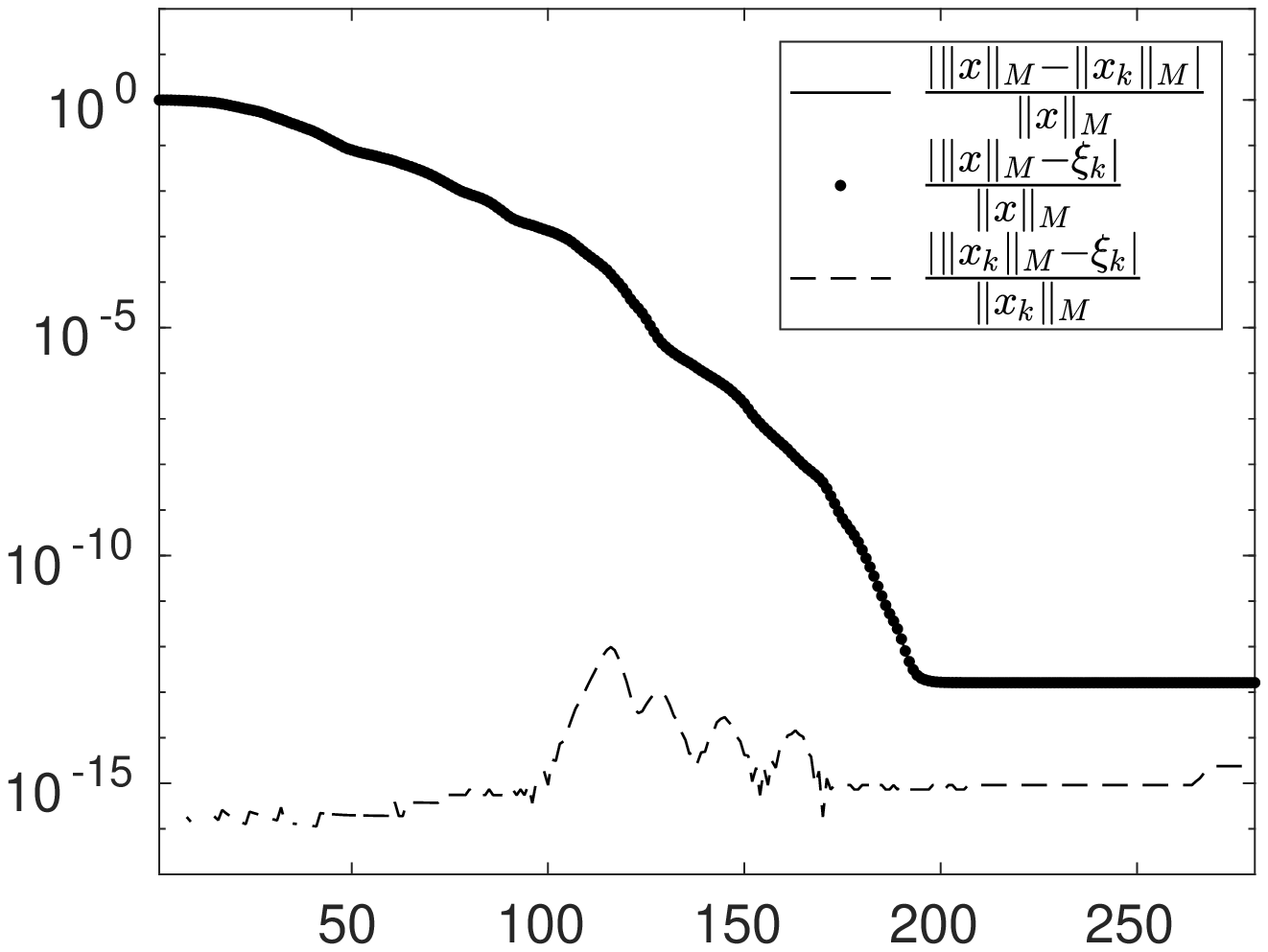}~~~
 \includegraphics[width=5.8cm]{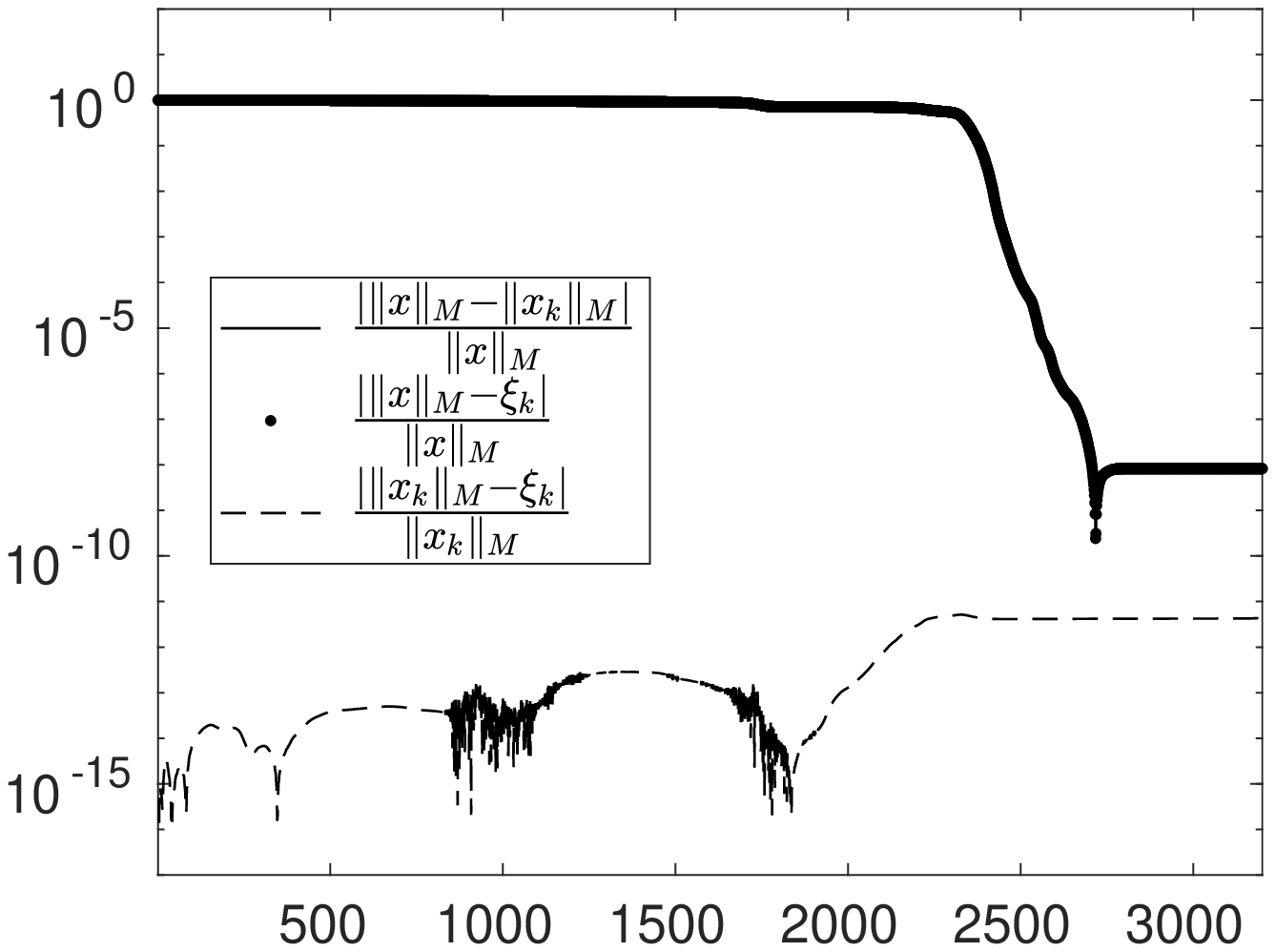} 
 \caption{Approximating $\|x_k\|_M$ using $\xi_k$ when solving
the preconditioned systems {\tt Pres\_Poisson} (left part) and {\tt s3dkt3m2} (right part).}
\label{fig8}
\end{figure}

Finally, let us test numerically, how well the quantity $\xi_k^{1/2}$  approximates $\|x_k\|$ in the unpreconditioned case, and
$\|x_k\|_M$ in the preconditioned case. Recall that
$\xi_k$ is defined by \eqref{eq:xik} and, in the experiments, we compute it
cheaply using the formulas \eqref{eq:nx1}--\eqref{eq:nx2}.

In \figurename~\ref{fig7} we consider the unpreconditioned systems {\tt bcsstk01} and {\tt Pb26}. By the dashed curve we plot the relative error of the approximation
$$
   \left| \frac{\|x_k\|-\xi_k}{\|x_k\|} \right| \quad\mbox{(dashed)}.
$$
In the left part (system {\tt bcsstk01}), the above mentioned relative error is close or below the level of $10^{-10}$, despite the severe loss of orthogonality. In other words, $\xi_k$ agrees with the approximated quantity $\|x_k\|$ to about 10 valid digits. For comparison we also plot with a solid curve the relative error of $\|x_k\|$ as an approximation of $\|x\|$, and by dots the relative error of $\xi_k$ as an approximation of $\|x\|$,
$$
   \left| \frac{\|x\|-\|x_k\|}{\|x\|} \right| \quad\mbox{(solid)},\qquad
    \left| \frac{\|x\|-\xi_k}{\|x\|} \right| \quad\mbox{(dots}).
$$
We can observe that the solid curve coincides visually with the dots until the level of $10^{-10}$ is reached. Below this level, the two curves can differ, but they are still close to each other. In the right part of the figure (system {\tt Pb26}), the relative accuracy of $\xi_k$ as an approximation of $\|x_k\|$ is even better, close to machine precision.

In \figurename~\ref{fig8} we consider systems
{\tt Pres\_Poisson} and {\tt s3dkt3m2} solved with preconditioning. Here $\xi_k$ is computed using the formulas \eqref{eq:nx1}--\eqref{eq:nx2}
from the PCG coefficients $\widehat{\gamma}_k$ and $\widehat{\delta}_k$, and it approximates $\|x_k\|_M$. Similarly as for the unpreconditioned systems, we can observe that $\xi_k$ approximates $\|x_k\|_M$ very accurately. In the considered examples, the relative errors are close to the level of machine precision.

\subsection{Approximating convergence characteristics} \label{sec:apply}
The cheap approximations to the smallest and largest Ritz values, and to the norms of approximate solutions
can be used 
to approximate various characteristics  
which provide some information about the convergence. 
In particular, in this section we concentrate on approximating 
the normwise backward error and the Gauss-Radau upper bound, for the preconditioned systems {\tt Pres\_Poisson} and {\tt s3dkt3m2}.


\begin{figure}
\centering
 \includegraphics[width=5.8cm]{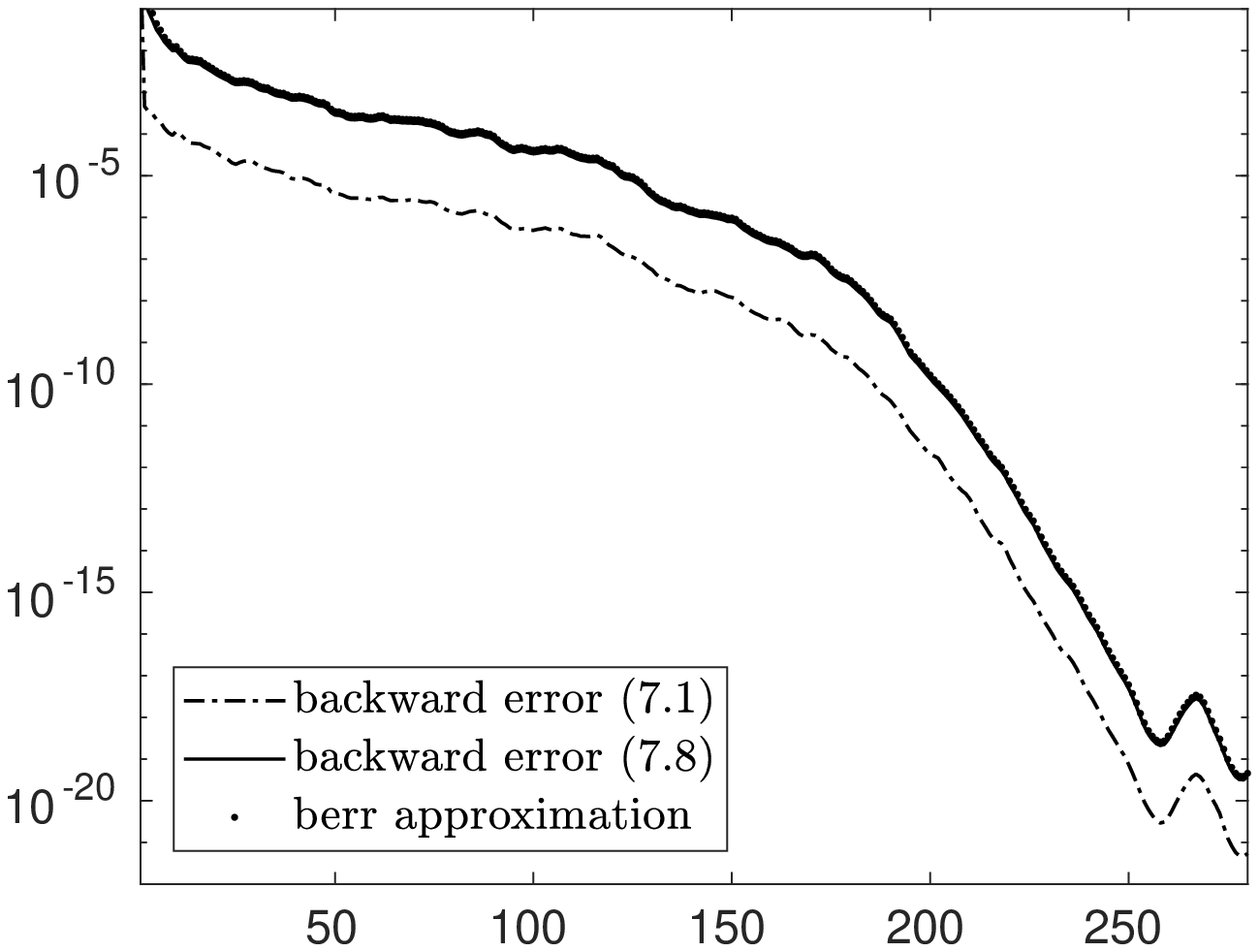}~~~
 \includegraphics[width=5.8cm]{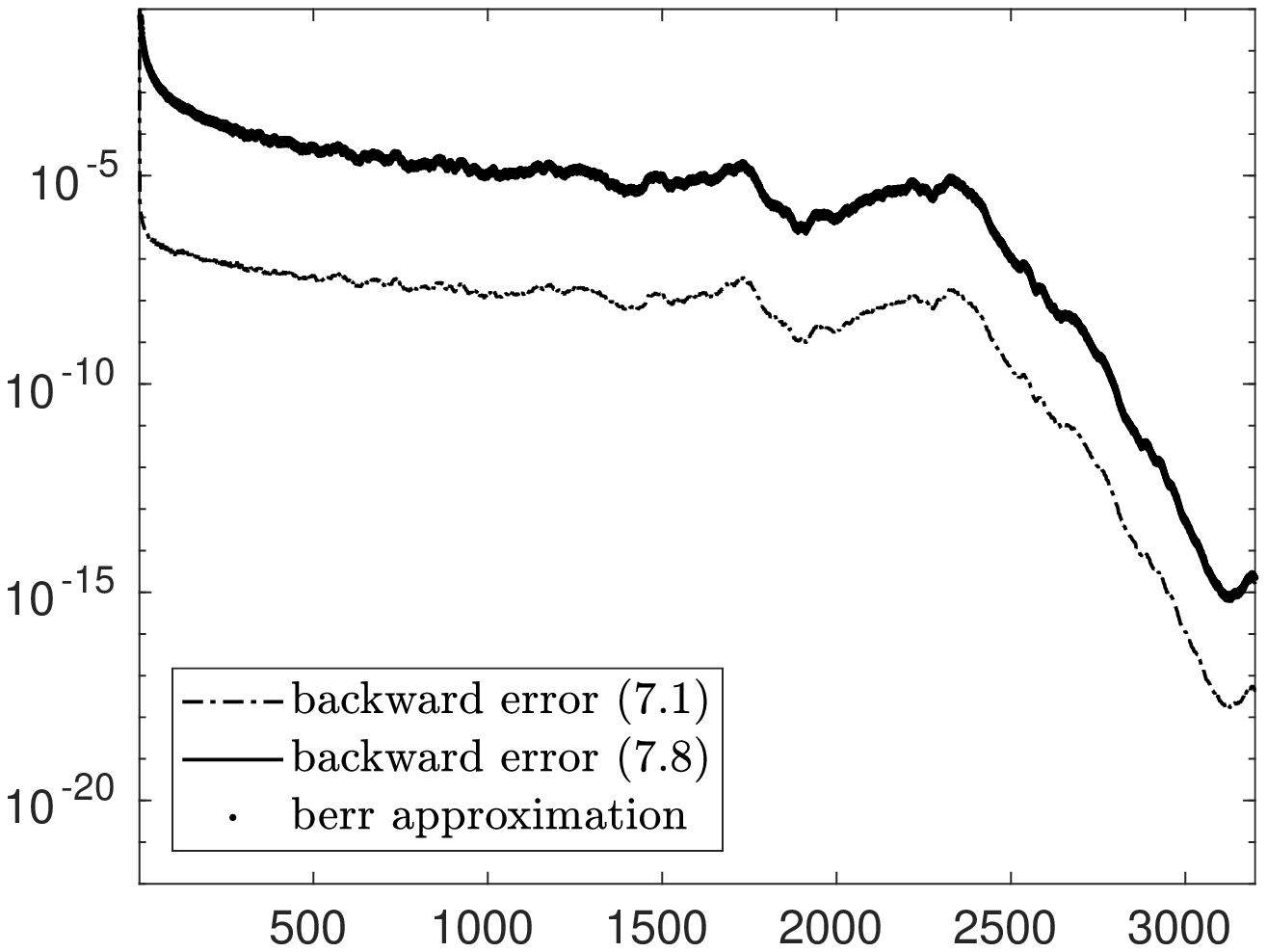}
\caption{Approximating the backward error for the preconditioned systems {\tt Pres\_Poisson}
(left part) and {\tt s3dkt3m2} (right part).}
\label{fig10}
\end{figure}

In Section~\ref{sec:be} we discussed approximation of the normwise backward error.
In \figurename~\ref{fig10} we plot the backward error \eqref{eq:bck} (solid curve) which corresponds to the original system, and the backward error 
\eqref{eq:pberr} (dash-dotted curve) which corresponds to the preconditioned system. As mentioned in Section~\ref{sec:beerPCG}, using the cheap techniques 
we can approximate the norm of the preconditioned matrix $\widehat{A}$, and the $M$-norm of the approximate solution $\|x_k\|_M$. Therefore, we can only efficiently approximate 
the backward error \eqref{eq:pberr}. The dots in \figurename~\ref{fig10} correspond to the approximations of the backward error \eqref{eq:pberr}, where $\|\widehat{A}\|$ was approximated using the incremental technique (Algorithm~\ref{alg:bidiagnorm}) and $\|x_k\|_M$ was computed 
using the formulas \eqref{eq:nx1}--\eqref{eq:nx2}. For both systems we can observe that the backward error \eqref{eq:pberr} visually coincides with its approximation.

\begin{figure}
\centering
 \includegraphics[width=5.8cm]{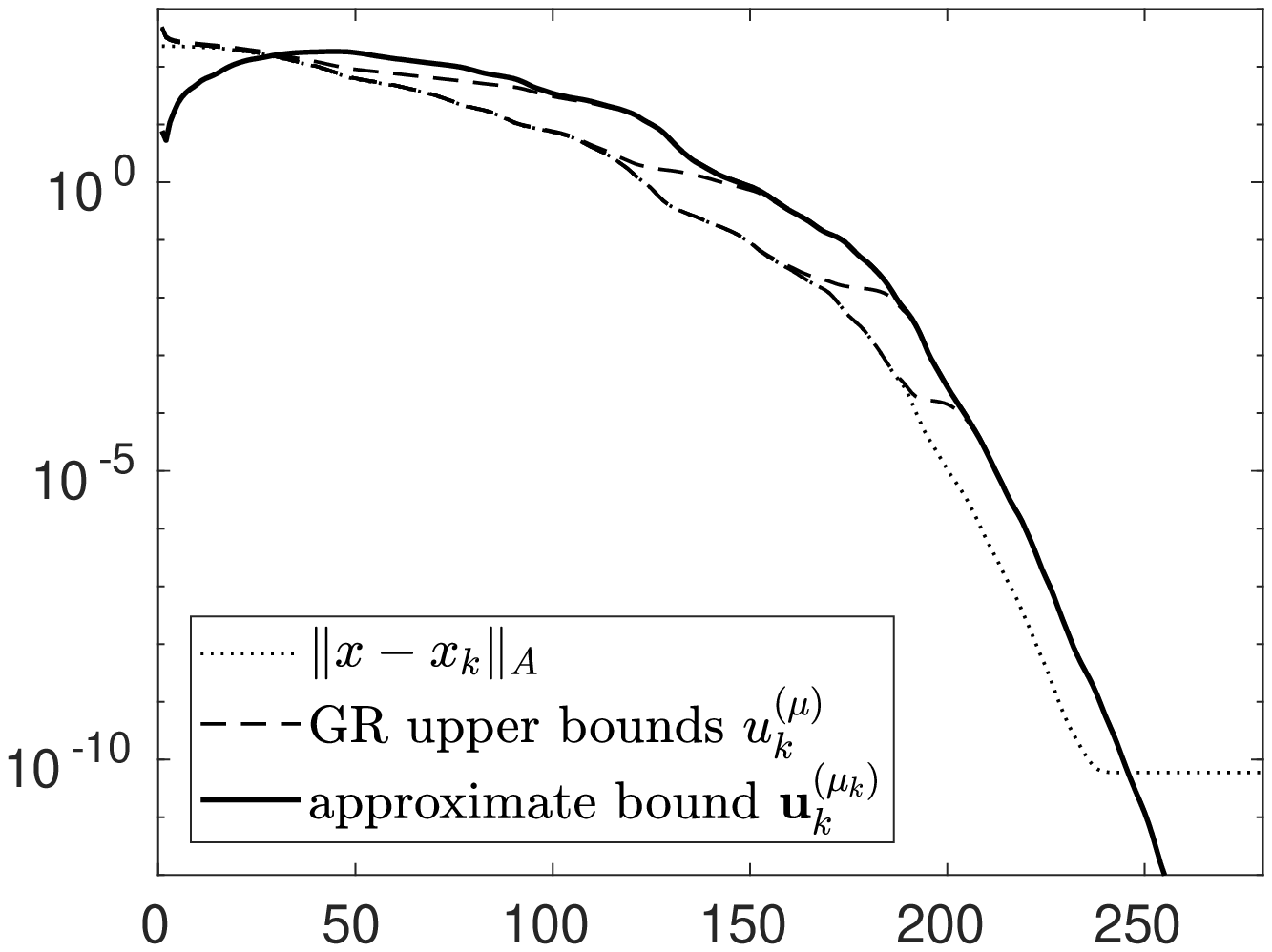}~~~
 \includegraphics[width=5.8cm]{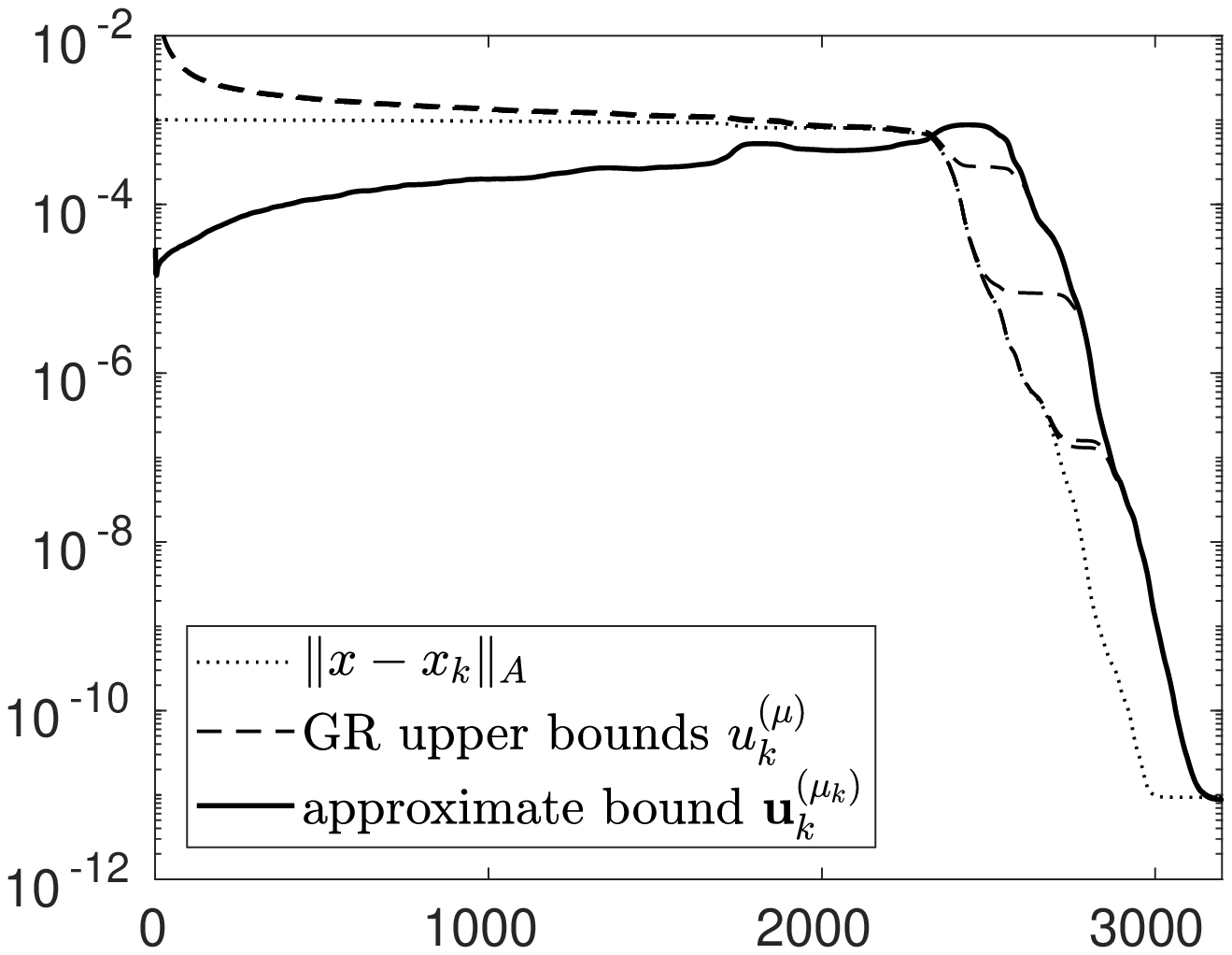}
\caption{Approximating the Gauss-Radau upper bound for the preconditioned systems {\tt Pres\_Poisson}
(left part) and {\tt s3dkt3m2} (right part).}
\label{fig9}
\end{figure}

The Gauss-Radau upper bound can be approximated using the approximate bound \eqref{eq:approx} which does not require any a priori information about the smallest eigenvalue; see Section~\ref{sec:radau}. 
In \figurename~\ref{fig9} we plot the $A$-norm of the error (dotted curve) and the Gauss-Radau upper bounds $u_k^{\smu}$ (dashed curves), where the values of $\mu $ closely approximate the smallest eigenvalue of the preconditioned matrix $\widehat{A}$ from below. Similarly as in Section~\ref{sec:bcsstk01}, we choose $\mu$ to be equal to 
$$
\frac{{\lambda}_{\min}(\widehat{A})}{(1+10^{-m})},\quad\mbox{for}\quad m=1,4,8,12.
$$
The approximate upper bound \eqref{eq:approx} 
using $\mu_k$
is plotted as a solid curve. As expected, the quantity \eqref{eq:approx} underestimates the $A$-norm of the error in the initial stage of convergence, since the smallest Ritz value is a poor approximation to the smallest eigenvalue. However, as soon as the smallest Ritz value approximates the smallest eigenvalue, the quantity \eqref{eq:approx} bounds the $A$-norm of the error from above. Moreover, in the final stage of convergence, 
the quantity \eqref{eq:approx} is as good as the Gauss-Radau upper bounds even if $\mu$ approximates ${\lambda}_{\min}(\widehat{A})$ tightly. As in the numerical example presented in Section~\ref{sec:bcsstk01}, we can observe that the Gauss-Radau upper bounds are very sensitive to the accuracy to which $\mu$ approximates  
$\lambda_{\min}(\widehat{A})$. Nevertheless, below some level, all the values of $\mu$ give visually the same upper bound, which is not very close to the $A$-norm of the error. This phenomenon appeared almost in all experiments we performed and we believe it deserves further investigation.

\begin{figure}
\centering
 \includegraphics[width=5.8cm]{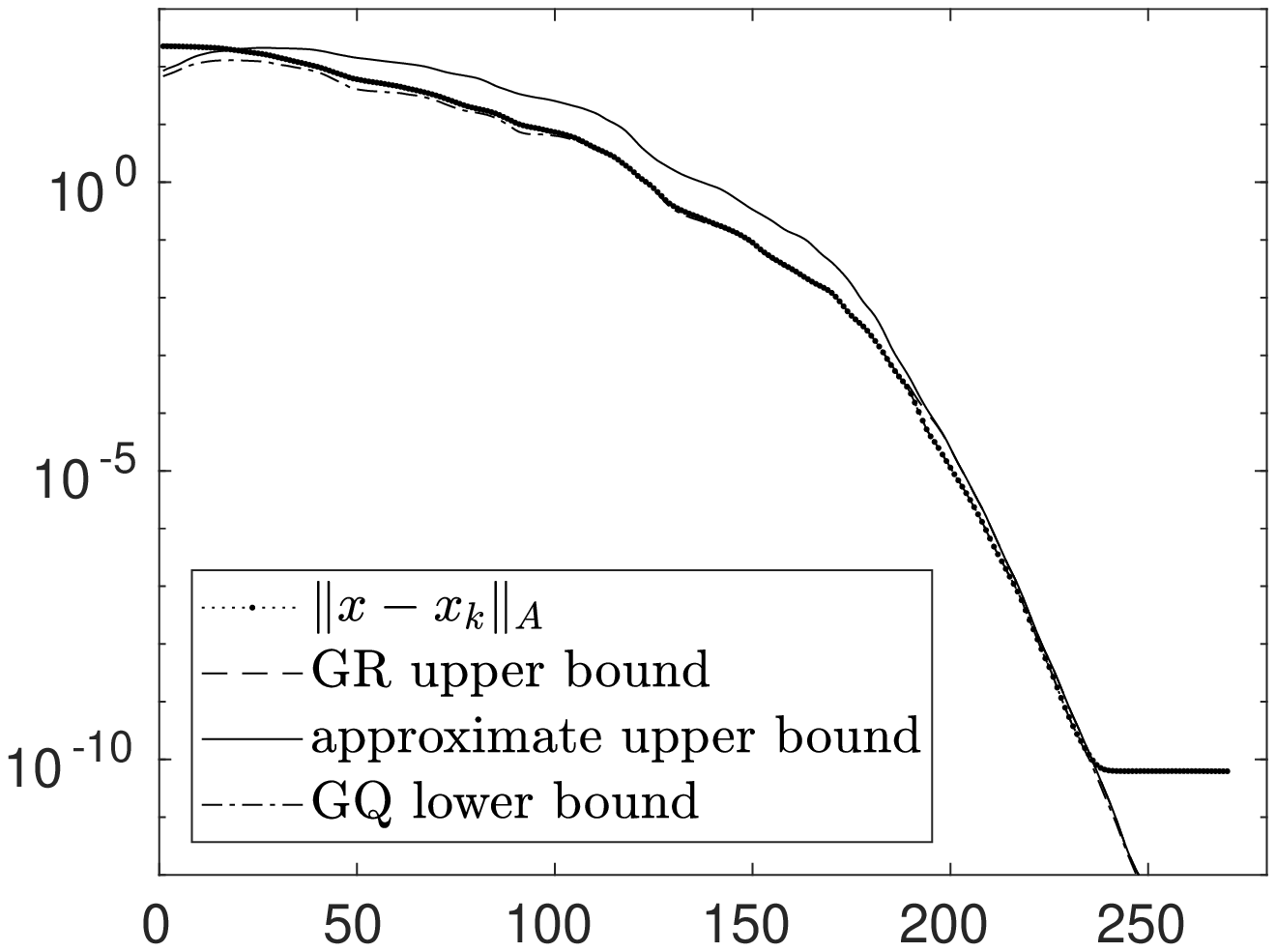}~~~
 \includegraphics[width=5.8cm]{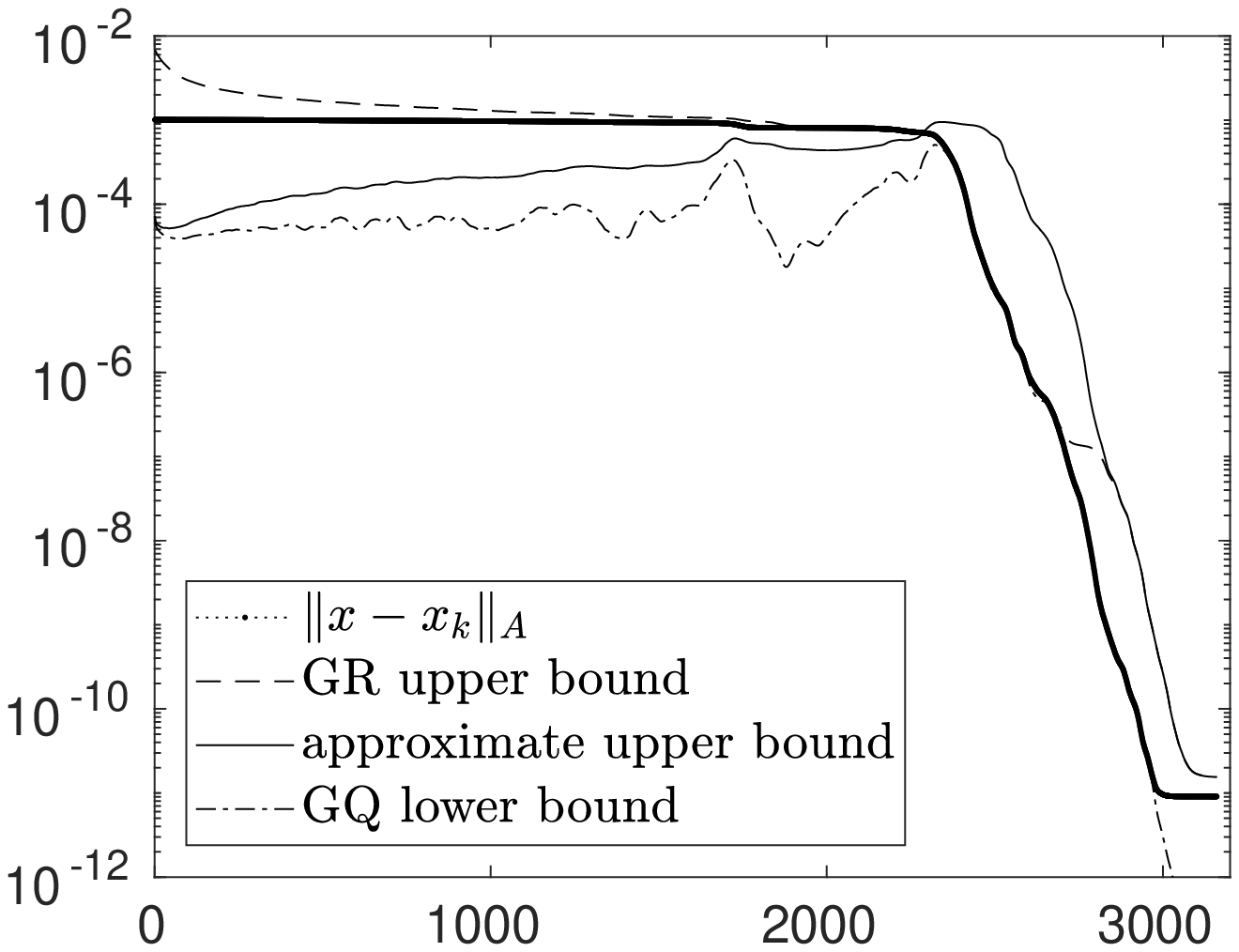}
\caption{Approximating the $A$-norm of the error using Gauss-Radau upper bound, approximate upper bound, and the Gauss lower bound for the preconditioned systems {\tt Pres\_Poisson}
(left part, $d=10$) and {\tt s3dkt3m2} (right part, $d=40$).}
\label{fig11}
\end{figure}

In the last numerical experiment (\figurename~\ref{fig11}) we choose the delay $d=10$ for the preconditioned system 
{\tt Pres\_Poisson} and $d=40$ for {\tt s3dkt3m2}.
We approximate the $A$-norm of the error (dotted curve) using the Gauss-Radau upper bound \eqref{eq:delayupper} (dashed curve) for a value of $\mu$ which closely approximates 
${\lambda}_{\min}(\widehat{A})$
from below, $\mu = {{\lambda}_{\min}(\widehat{A})}/{(1+10^{-12})}$, simulating the situation when we know ${\lambda}_{\min}(\widehat{A})$ in advance from the application. If there is no a priori information about ${\lambda}_{\min}(\widehat{A})$, one can use the approximate upper bound \eqref{eq:newbound3} (solid curve) with $\mu_{k+d}$. For comparison we also plot the Gauss lower bound based on \eqref{eq:delaylower} (dash-dotted curve).

In the left part of \figurename~\ref{fig11} we can observe that $d=10$ significantly improves all the bounds. The approximate upper bound (solid) is slightly overestimating the $A$-norm of the error $\Vert x-x_k \Vert_A$ in the initial stage of convergence. When convergence accelerates (around iteration 200), all the bounds approximate $\Vert x-x_k \Vert_A$ tightly. In \figurename~\ref{fig9} (left part) we have observed that the curves describing upper bounds are about 10 iterations delayed in the later stage of convergence. This is the reason why the choice of $d=10$ is sufficient to get good approximations to $\Vert x-x_k \Vert_A$.

In the right part of \figurename~\ref{fig11} we consider the more complicated problem with the system {\tt s3dkt3m2}. Here the choice of $d=40$  does not improve the bounds too much in the initial stagnation phase. The Gauss lower bound (dash-dotted) as well as the approximate upper bound (solid) underestimate $\Vert x-x_k \Vert_A$ significantly. The only useful bound in this phase of convergence is the Gauss-Radau upper bound (dashed) with a prescribed value of $\mu$. When the $A$-norm of the error starts to decrease (around iteration 2300), the Gauss lower bound with $d=40$ starts to be visually the same as $\Vert x-x_k \Vert_A$, until the ultimate level of accuracy is reached. This is not the case for the approximate upper bound (solid), which is significantly delayed. However, in comparison to \figurename~\ref{fig9} (right part), the approximate upper bound is moved about 40 iterations towards $\Vert x-x_k \Vert_A$. The Gauss-Radau upper bound (dashed) approximates at first
$\Vert x-x_k \Vert_A$ tightly, but, below a certain level, it starts to give the same results as the approximate upper bound, i.e., the curve is delayed. This experiment demonstrates the potential weakness of upper bounds in the final stage of convergence, and also shows a need for an adaptive choice of $d$.

\section{Conclusions}
 \label{sec:concl}
In this paper we derived a new upper bound for the $A$-norm of the error in CG. The new bound is closely related to the Gauss-Radau upper bound. While the Gauss-Radau upper bound can be very sensitive to the choice of the parameter $\mu$ which should closely approximate the smallest eigenvalue of the (preconditioned) system matrix from below, the new bound is not sensitive to the choice of $\mu$. One can use it even if $\mu$ is larger than the smallest eigenvalue, as an approximate upper bound, so that $\mu$ can be chosen as an  approximation to the smallest Ritz value.

We next developed a very cheap algorithm for approximating the smallest and largest Ritz values during the CG computations. These approximations can further be improved using inverse iterations, 
at the cost of storing the CG coefficients and solving a linear system with a tridiagonal matrix at each CG iteration. The cheap approximations to the smallest and largest Ritz values can be useful in general, e.g., to approximate almost for free the condition number of the system matrix, or to estimate the ultimate level of accuracy. In this paper, we used them to approximate the parameter $\mu$ for the new upper bound on the $A$-norm of the error, and also to approximate the 2-norm of the system matrix when computing  the normwise backward error.  

Numerical experiments predict that the approximate upper bound for the $A$-norm of the error which uses the cheap technique to approximate the smallest Ritz value is in the later stage of convergence usually as good as the Gauss-Radau upper bound for which $\mu$ has to be prescribed. We also observed that even if the smallest eigenvalue is known in advance, the Gauss-Radau upper bound looses its sharpness as the $A$-norm of the error decreases, and, below some level, it is the same as the approximate upper bound. This phenomenon is caused by the underlying finite precision Lanczos process, and it deserves additional investigation.

As further demonstrated, the quality of the lower and upper bounds can be improved using the delay parameter $d$. This technique is very promising for practical estimation of the $A$-norm of the error in CG. However, constant value of $d$ is usually not sufficient in the initial stage of convergence, and it requires too many extra steps of CG in the convergence phase. Hence, there is a need for developing a heuristic technique to choose $d$ adaptively, to reflect the required accuracy of the estimate. We believe that results of this paper can be useful in developing such a technique. 
The adaptive choice of $d$ remains a subject of our further work.


\begin{thebibliography}{10}
\providecommand{\url}[1]{{#1}}
\providecommand{\urlprefix}{URL }
\expandafter\ifx\csname urlstyle\endcsname\relax
  \providecommand{\doi}[1]{DOI~\discretionary{}{}{}#1}\else
  \providecommand{\doi}{DOI~\discretionary{}{}{}\begingroup
  \urlstyle{rm}\Url}\fi

\bibitem{ArDuRu1992}
Arioli, M., Duff, I.S., Ruiz, D.: Stopping criteria for iterative solvers.
\newblock SIAM J. Matrix Anal. Appl. \textbf{13}(1), 138--144 (1992)

\bibitem{Ba1994}
Barrett, R., Berry, M., Chan, T.F., et~al.: Templates for the solution of
  linear systems: building blocks for iterative methods.
\newblock Society for Industrial and Applied Mathematics (SIAM), Philadelphia,
  PA (1994)

\bibitem{Bi1990}
Bischof, C.H.: Incremental condition estimation.
\newblock SIAM J. Matrix Anal. Appl. \textbf{11}(2), 312--322 (1990)

\bibitem{DaEiGo1972}
Dahlquist, G., Eisenstat, S.C., Golub, G.H.: Bounds for the error of linear
  systems of equations using the theory of moments.
\newblock J. Math. Anal. Appl. \textbf{37}, 151--166 (1972)

\bibitem{DaGoNa1979}
Dahlquist, G., Golub, G.H., Nash, S.G.: Bounds for the error in linear systems.
\newblock In: Semi-infinite programming (Proc. Workshop, Bad Honnef, 1978),
  \emph{Lecture Notes in Control and Information Sci.}, vol.~15, pp. 154--172.
  Springer, Berlin (1979)

\bibitem{DuVo2002}
Duff, I.S., V{\"o}mel, C.: Incremental norm estimation for dense and sparse
  matrices.
\newblock BIT \textbf{42}(2), 300--322 (2002)

\bibitem{DuTu2014}
Duintjer~Tebbens, J., T{\r{u}}ma, M.: On incremental condition estimators in
  the 2-norm.
\newblock SIAM J. Matrix Anal. Appl. \textbf{35}(1), 174--197 (2014)

\bibitem{EiEr2001}
Eiermann, M., Ernst, O.G.: Geometric aspects of the theory of {K}rylov subspace
  methods.
\newblock Acta Numer. \textbf{10}, 251--312 (2001)

\bibitem{FiGo1994}
Fischer, B., Golub, G.H.: On the error computation for polynomial based
  iteration methods.
\newblock In: Recent advances in iterative methods, \emph{IMA Vol. Math.
  Appl.}, vol.~60, pp. 59--67. Springer, New York (1994)

\bibitem{GoMe1994}
Golub, G.H., Meurant, G.: Matrices, moments and quadrature.
\newblock In: Numerical analysis 1993 (Dundee, 1993), \emph{Pitman Res. Notes
  Math. Ser.}, vol. 303, pp. 105--156. Longman Sci. Tech., Harlow (1994)

\bibitem{GoMe1997}
Golub, G.H., Meurant, G.: Matrices, moments and quadrature. {II}. {H}ow to
  compute the norm of the error in iterative methods.
\newblock BIT \textbf{37}(3), 687--705 (1997)

\bibitem{B:GoMe2010}
Golub, G.H., Meurant, G.: Matrices, moments and quadrature with applications.
\newblock Princeton University Press, USA (2010)

\bibitem{GoSt1994}
Golub, G.H., Strako{\v{s}}, Z.: Estimates in quadratic formulas.
\newblock Numer. Algorithms \textbf{8}(2-4), 241--268 (1994)

\bibitem{B:GoLo2013}
Golub, G.H., Van~Loan, C.F.: Matrix computations, fourth edn.
\newblock Johns Hopkins Studies in the Mathematical Sciences. Johns Hopkins
  University Press, Baltimore, MD (2013)

\bibitem{Gr1997}
Greenbaum, A.: Estimating the attainable accuracy of recursively computed
  residual methods.
\newblock SIAM J. Matrix Anal. Appl. \textbf{18}(3), 535--551 (1997)

\bibitem{GuRo2001}
Gutknecht, M.H., Rozlo{\v{z}}n{\'{\i}}k, M.: By how much can residual
  minimization accelerate the convergence of orthogonal residual methods?
\newblock Numer. Algo. \textbf{27}(2), 189--213 (2001)

\bibitem{GuRo2001a}
Gutknecht, M.H., Rozlo{\v{z}}nik, M.: Residual smoothing techniques: do they
  improve the limiting accuracy of iterative solvers?
\newblock BIT \textbf{41}(1), 86--114 (2001)

\bibitem{HeSt1952}
Hestenes, M.R., Stiefel, E.: Methods of conjugate gradients for solving linear
  systems.
\newblock J. Research Nat. Bur. Standards \textbf{49}, 409--436 (1952)

\bibitem{B:Hi1996}
Higham, N.J.: Accuracy and stability of numerical algorithms.
\newblock Society for Industrial and Applied Mathematics (SIAM), Philadelphia,
  PA (1996)

\bibitem{Ko1999}
Kouhia, R.: Description of the {CYLSHELL} set.
\newblock Laboratory of Structural Mechanics, Finland (1998)

\bibitem{Me1997}
Meurant, G.: The computation of bounds for the norm of the error in the
  conjugate gradient algorithm.
\newblock Numer. Algo. \textbf{16}(1), 77--87 (1998)

\bibitem{B:Me2006}
Meurant, G.: The {L}anczos and conjugate gradient algorithms, from theory to
  finite precision computations, \emph{Software, Environments, and Tools},
  vol.~19.
\newblock Society for Industrial and Applied Mathematics (SIAM), Philadelphia,
  PA (2006)

\bibitem{MeTi2013}
Meurant, G., Tich{\'y}, P.: On computing quadrature-based bounds for the
  {$A$}-norm of the error in conjugate gradients.
\newblock Numer. Algo. \textbf{62}(2), 163--191 (2013)

\bibitem{MeTi2014}
Meurant, G., Tich{\'y}, P.: Erratum to: {O}n computing quadrature-based bounds
  for the {A}-norm of the error in conjugate gradients [mr3011386].
\newblock Numer. Algorithms \textbf{66}(3), 679--680 (2014)

\bibitem{OePr1964}
Oettli, W., Prager, W.: Compatibility of approximate solution of linear
  equations with given error bounds for coefficients and right-hand sides.
\newblock Numerische Mathematik \textbf{6}(1), 405--409 (1964)

\bibitem{PaSa1982}
Paige, C.C., Saunders, M.A.: L{SQR}: an algorithm for sparse linear equations
  and sparse least squares.
\newblock ACM Trans. Math. Software \textbf{8}(1), 43--71 (1982)

\bibitem{PaDh2000}
Parlett, B.N., Dhillon, I.S.: Relatively robust representations of symmetric
  tridiagonals.
\newblock In: Proceedings of the {I}nternational {W}orkshop on {A}ccurate
  {S}olution of {E}igenvalue {P}roblems ({U}niversity {P}ark, {PA}, 1998), vol.
  309, pp. 121--151 (2000)

\bibitem{RiGa1967}
Rigal, J.L., Gaches, J.: On the compatibility of a given solution with the data
  of a linear system.
\newblock Journal of the ACM (JACM) \textbf{14}(3), 543--548 (1967)

\bibitem{StTi2002}
Strako{\v{s}}, Z., Tich{\'y}, P.: On error estimation in the conjugate gradient
  method and why it works in finite precision computations.
\newblock Electron. Trans. Numer. Anal. \textbf{13}, 56--80 (2002)

\bibitem{StTi2005}
Strako{\v{s}}, Z., Tich{\'y}, P.: Error estimation in preconditioned conjugate
  gradients.
\newblock BIT \textbf{45}(4), 789--817 (2005)

\end{thebibliography}

\end{document}